\setlist[enumerate]{leftmargin=*}
\definecolor{luh-dark-blue}{rgb}{0.0, 0.313, 0.608}
\newtheorem{satz}{Proposition}[section]
\newtheorem{lemma}[satz]{Lemma} 
\newtheorem{remark}[satz]{Remark}
\newtheorem{theorem}[satz]{Theorem}
\newtheorem{definition}[satz]{Definition}
\newcommand{\chookrightarrow}{\mathrel{\lhook\joinrel\relbar\kern-.8ex\joinrel\lhook\joinrel\rightarrow}}
\numberwithin{equation}{section}
\author{Long Pei}
\address{School of Mathematics (Zhuhai), Sun Yat-sen University, 519082 Zhuhai, China}
\email{peilong@mail.sysu.edu.cn}
\author{Fengyang Xiao}
\address{School of Mathematics (Zhuhai), Sun Yat-sen University, 519082 Zhuhai, China}
\email{xiaofy5@mail2.sysu.deu.cn}
\author{Pan Zhang}
\address{School of Mathematics (Zhuhai), Sun Yat-sen University, 519082 Zhuhai, China}
\email{zhangp273@mail2.sysu.edu.cn}
\begin{document}
	
	\title[]{On the steadiness of symmetric solutions to higher order perturbations of KdV}
	
	
%
%
%

	
\begin{abstract}
We consider the traveling structure of  symmetric solutions to the Rosenau-Kawahara-RLW equation and the perturbed R-KdV-RLW equation.  Both equations are  higher order perturbations of the classical KdV equation. For the Rosenau-Kawahara-RLW equation, we prove that classical and weak solutions with a priori symmetry must be traveling solutions. For the more complicated  perturbed R-KdV-RLW equation, we classify all symmetric traveling solutions, and prove that there exists no nontrivial symmetric traveling solution of solitary type once dissipation or shoaling perturbations exist. This gives a new perspective for evaluating the suitableness of a model for water waves. In addition, this result illustrates the sharpness of the symmetry principle in [Int. Math. Res. Not. IMRN, 2009; Ehrnstrom, Holden \& Raynaud]  for solitary waves. 
\end{abstract}

	\keywords{Symmetry, steady solutions,  Rosenau-Kawahara-RLW, perturbed R-KdV-RLW}
	
	
	
	\maketitle
	
	\section{Introduction}
	
	Of concern in this paper is the symmetry and traveling properties of the Rosenau-Kawahara-RLW equation \cite{sym1511198}  and the perturbed R-KdV-RLW equation \cite{Pdswwrsm}. The Rosenau-Kawahara-RLW equation has the  form
	\begin{equation}\label{eq:org-R-K-RLW}
		v_t+a v_x +bv^m v_x +\kappa v_{xxx}-\alpha v_{xxt}+\beta v_{xxxxt}-\mu v_{xxxxx} = 0,
	\end{equation}
	where $a,b,\kappa,\mu$ are all real constants, $\alpha,\beta$ are both non-negative constants and $m$ is a positive integer. It is viewed, as explained below, as a generalization of the classical Korteweg-de Vries equation (KdV) 
	\begin{equation}\label{eq:KdV}
		v_t+a v_x +bv v_x +\kappa v_{xxx} = 0
	\end{equation}
	for unidirectional water waves. Benjamin et al. \cite{MR427868}  replaced $v_{xxx}$ by $v_{xxt}$  in  \eqref{eq:KdV}  to obviate the shortcomings of KdV, including in particular the dispersive blow-up of solution due to concentration of short-wave components. The new equation is called the regularized long wave equation\footnote{It is also referred to as  the Benjamin-Bona-Mahon equation (BBM).} (RLW), and it approximates the full water wave problem  to the same order as KdV. 
	
	\medskip
	
	Both RLW and KdV  exhibit the remarkable phenomenon of solitary traveling waves. These waves decay fast and are observed firstly by Russell in a river. Real experiments indicate later that these solitary waves  actually have a local core but  oscillate at far field, although the  oscillation is too small to be observed by Russell. Such wave, instead of  ceasing at far field as solitary wave, is called weakly nonlocal solitons or nanopeterons \cite{BOYD1991129}. This kind of wave can be characterized by including $v_{xxxxx}$ in the KdV \eqref{eq:KdV}, leading to the fifth-order KdV\footnote{It is also referred to as the Kawahara equation \cite{OscillatoryKT}.} (fKdV). The fKdV arises naturally for gravity-capillary waves when the Bond number is very close to and slightly less than $\frac{1}{3}$ \cite{MR969032}. 
	Later, the term $v_{xxxxt}$ is introduced into \eqref{eq:KdV} by Rosenau in \cite{Dddshoe} to overcome the break-down of a low-order  transmission line model for  L-C circuits when two neighbor inductors are far away from each other.

	\medskip
	
	
	As particular coefficients vanish,  \eqref{eq:org-R-K-RLW} reduces to several known models,  including KdV, RLW, Rosenau equation and Kawahara equation (see  \cite{sym1511198} for a review).
	Various explicit traveling wave solutions, including soliton solutions, have been found for  \eqref{eq:org-R-K-RLW} (see  \cite{MR4572912} and references therein). The physically relevant traveling solutions found in the above references are all symmetric, which naturally  arise the question and the main concern of this paper: do traveling solutions to  equation  \eqref{eq:org-R-K-RLW} have a priori symmetry?
	
	\medskip
	
	The study of \emph{a priori} symmetry inherent in steady solutions has long been a central topic of the full water wave problem for two-dimensional incompressible ideal fluids governed by the Euler equation. For solitary gravity waves,  a priori symmetry has been confirmed by the method of moving planes for supercritical waves \cite{Craig01011988,MR2407226} (nonexistence of subcritical solitary saves is confirmed for an arbitrary distribution of vorticity \cite{MR4271966}). For periodic irrotational waves  \cite{Garabedian1965,Okamoto2001} and  waves with vorticity \cite{Constantin2004a,Constantin2004,MR2362244,MR2335760,MR2407226,Matioc2013}, similar results are also confirmed,  provided that some monotone structure  is imposed on the wave profile. Furthermore, periodic traveling wave solutions with singularity in a class of weakly dispersive systems in the shallow water regime are shown to exhibit a priori symmetry, provided the reflection criterion established in \cite{MR4550679} is satisfied. These singluar traveling solutions are connected to the Whitham's conjecture \cite{MR4002168,MR4458407} for shallow water waves, while the latter is the counterpart of  Stokes waves in the Stokes conjecture for the full water problem. Intriguingly, Maehlen and Seth construct recently asymmetric periodic waves by bifurcation from trivial solutions for a class of equations of the Whitham type for capillary-gravity waves, as detailed in \cite{MR4840497,SvenssonSeth2024}.   
	
	\medskip
	
	The above results  make it intriguing to characterize precisely the connection between symmetry and steadiness of solutions. The first result in this direction is  by Ehrnstrom et al. \cite{Ehrnstroem2009}, where a general principle is put forward to confirm the  a priori steadiness of  symmetric solutions to general evolution equations. This work is further generalized in \cite{Bruell2017}, where the steadiness of symmetric solutions is thoroughly studied for general evolution equations in higher dimensions and differential systems via three principles. In particular,  an alternative argument  by Pei \cite{Pei2023}  uncovers  how the symmetry determines the velocity and profile shape. The principle mentioned above also holds in weak solution formulation for various other dispersive models  and models in high dimensions (see also \cite{Geyer2015,MR4564051,MR4732989}).

	\medskip

	In this papar, we first confirm that all symmetric classical solutions to \eqref{eq:org-R-K-RLW} are all traveling wave solutions. Although the result is consistent with the principle in \cite{Ehrnstroem2009}, we give a proof based on the argument in \cite{MR3603270} so that the propagation speed and the shape of the wave profile are determined in an explicit way by the symmetric structure. Singular traveling waves that appear in the Stokes conjecture or the Whitham's conjecture are featured with a peak or cusp at the highest crest. We then introduce weak  formulation to describe waves with such singular structures, and prove that a weak  solution on the line with spatial symmetry must be a traveling solution for the Rosenau-Kawahara-RLW equation. 
	
	\medskip 
	
	A main, novel contribution of this work is for the perturbed R-KdV-RLW equation  (see \eqref{eq:gkdv}  in Section \ref{sec:example} for precise form), which is the other of the two models that we focus on in this paper. In fact, we introduce a new way to evaluate the scope of validity of complicated approximation models for water waves from the perspective of a priori symmetric structure of traveling solutions. It has been for long time a trend to introduce higher order derivatives in  models to uncover more physical phenomenon of real waves, or to inspire new efficient numerical algorithms.  In this context,  the perturbed R-KdV-RLW equation \eqref{eq:gkdv} has recently been introduced in \cite{Pdswwrsm} as a generalization of the Rosenau-Kawahara-RLW equation \eqref{eq:org-R-K-RLW}.  This equation is highly nonlinear and consists of perturbations corresponding to dispersive, shoaling and dissipative effects, respectively. 
	It allows for a symmetric 1-soliton \cite{Pdswwrsm} when the shoaling and dissipative effects are excluded.  However, whether \eqref{eq:gkdv}  in general has a symmetric traveling  solution of solitary type is unknown previously. We give a negative answer for this question.
	Our argument is based on a new perspective on the constraint equations that symmetric traveling solutions satisfy. To be precise, we observe the fact that the symmetry axis must travel at  the same speed as the wave profile, and then manage to use this important observation to reduce a  linear partial differential equation with non-constant coefficients (as one of the constraint equations) to a linear ordinary differential equation with constant coefficients. Then, we are able to classify all symmetric steady solutions to \eqref{eq:gkdv} when dissipation and shoaling effects are involved, and prove  that such equation does not have solitary type symmetric  traveling solutions in the classical sense. The result indicates the shortcoming of this model for one dimension water waves.  The argument is also expected to work for evaluating the scope of validity of other complicated approximation models for water waves.  It is worth to mention that although we give exact, explicit form of all possible solutions, it involves complicated, tedious calculation to verify whether  or not they do solve the original perturbed R-KdV-RLW equation for given initial data. We illustrate via one particular case how to get a  criterion for the perturbed R-KdV-RLW equation to allow for a symmetric traveling solution in that particular case. 
	
	\medskip
	
	We conclude this section by outlining the framework of this paper. In Section \ref{sec:preliminary}, we  introduce the definition of   symmetry and steadiness for general functions. 
	In Section \ref{sec:Rosenau-K-RLW_smooth}, we prove that classical symmetric solutions to the generalized Rosenqu-Kawahara-RLW equation  are traveling solutions. In Section \ref{sec:R-K-RLW_weak}, we work in weak formulation, and prove that a weak solution of the generalized Rosenqu-Kawahara-RLW equation with spatial symmetry is a traveling solution. In section \ref{sec:example}, we classify all possible classical symmetric steady solutions of  the perturbed R-KdV-RLW equation, and prove that these exists no symmetric traveling solution of solitary type when dissipative perturbations are included. Finally, we give an equivalent condition for the perturbed R-KdV-RLW equation to allow for symmetric traveling solutions for a particular set of coefficients in the equation  and given initial data.

	\section{Preliminary}\label{sec:preliminary}
	
	Denote by $I$ the interval of existence for solutions to a given equation, which is usually taken as $[0,T]$ with some $T>0$. We first introduce the definition of symmetry of a function $f(t,x)$, $(t,x)\in I \times \mathbb{R}$, with respect to  the $x$ variable and the steadiness of $f(t,x)$ in the $x$-direction. 

	\begin{definition}\label{def:symmetric_function}
		We say a function $f(t,x)$ is spatially symmetric if there exists a function $\lambda(t)\in C^1(\mathbb{R})$ such that 
		\begin{equation*}
			f(t,x)=f(t,2\lambda(t)-x)
		\end{equation*}
		for every $(t,x)\in I\times \mathbb{R}$. We call $\lambda=\lambda(t)$ the axis of symmetry and the reflection of $f$ with respect to $\lambda(t)$ is denoted by
		\begin{equation*}
			f_\lambda(t,x) := f(t,2\lambda(t)-x).
		\end{equation*}
	\end{definition}
	
	
	\begin{definition}
		We say a function $f(t,x)$ with $(t,x)\in I\times\mathbb{R}$ is  steady  if there exists a function $g$ such that    
		\begin{equation}
			f(t,x)=g(x-ct) \nonumber
		\end{equation}
		for  $(t,x) \in I\times\mathbb{R}$.
	\end{definition}
	
	We introduce some notations. The space $C^{k}(U) $, where $ U \subseteq \mathbb{R}^n $ is an open set and $k \in \mathbb{N} \cup \{\infty\} $, denotes the class of functions on $ U$ with continuous spatial derivatives of all orders up to $ k$. Let $C^{\infty}_{c} (U)  $ denote the space of smooth functions $\phi : U \rightarrow \mathbb{R}$ with compact support in $U$, and the funtions in $ C^{\infty}_{c} (U)$ are the usual test functions. The space $L^{p}(U)$, $1 \leq p < \infty$, consists of all function $f: U \rightarrow \mathbb{R}$ such that
	\begin{equation*}
		\Vert f \Vert_{L^{p}} := \left(\int_{U} \vert f \vert^{p} \mathrm{d} x\right)^{\frac{1}{p}} < \infty. 
	\end{equation*}
	We denote by $\mathcal{S}$ and $\mathcal{S}^{\prime}$ the spaces of Schwartz functions and tempered distributions, respectively. Note that the Fourier transform extends naturally from $\mathcal{S}$ to $\mathcal{S}^{\prime}$. 
	Then,  the  Sobolev space $ H^s(\mathbb{R}^n) $, $ s \in \mathbb{R} $, is the space of all tempered distributions $ u \in \mathcal{S}^{\prime}(\mathbb{R}^n) $ whose Fourier transform $ \hat{u} $ is finite in the following norm
	\begin{equation*}
		\|u\|_{H^s} := \left( \, \int_{\mathbb{R}^n} \big(1 + |\xi|^2\big)^s |\hat{u}(\xi)|^2 \, \mathrm{d}\xi \right)^{\frac{1}{2}}.
	\end{equation*}
	
	We also refer traveling solutions to steady solutions frequently in the following. For convenience, we denote by $L:=1 - \alpha \partial_x^{2}+\beta \partial_x^{4}$ and rewrite the generalized Rosenau-Kawahara-RLW equation \eqref{eq:org-R-K-RLW} into the following concise form
	\begin{equation}\label{eq:R-K-RLW}
		Lv_t+F(v,\partial_{x})=0,
	\end{equation}
	where $ F(v,\partial_{x}):=a v_x+bv^mv_x+\kappa v_{xxx}-\mu v_{xxxxx}$.
	
	\section{Classical symmetric solutions of Rosenau type equations}\label{sec:Rosenau-K-RLW_smooth}
	
	\subsection{The generalized Rosenau-Kawahara-RLW equation}
	
	In this section, we prove that classical symmetric solutions of the generalized Rosenau-Kawahara-RLW equation \eqref{eq:R-K-RLW}  must be traveling wave solutions.  This result also holds for other equations in this family, such as the RLW equation, the (generalized) Rosenau-RLW and the Rosenau-KdV-RLW (more details are given in Section \ref{sect:specific models}). 
	Note that classical solutions of physical relevance to this model should be bounded. Therefore, classical solutions can be viewed as tempered distribution.  We can then use the density of of $\mathcal{S}$ in $\mathcal{S}^{\prime}$ and the Fourier transform for tempered distributions to rewrite \eqref{eq:R-K-RLW}   as 
	\begin{equation}\label{eq:R-K-RLW rewritten with L inverse}
		v_t+k*F(v,\partial_{x})=0,
	\end{equation}
	where the convolution kernel $k$ corresponds to the inverse of the operator $L$ (it is invertible due to the non-negativity of $\alpha$ and $\beta$) and  is given via the Fourier transform  by 
	\begin{equation}
		\mathcal{F}(k*f)(\xi)=(1+\alpha\xi^{2}+\beta\xi^{4})^{-1}\mathcal{F}(f)(\xi).
		\nonumber
	\end{equation}
	Note that the symbol $(1+\alpha\xi^{2}+\beta\xi^{4})^{-1}$ is smooth with respect to  $\xi \in \mathbb{R}$,  and decays at infinity. The Paley-Wiener type theorem then guarantees that $k$ decays fast at infinity,  is singular at the origin but integrable on $\mathbb{R}$. The existence and uniqueness of the solution of equation \eqref{eq:R-K-RLW} has been studied in \cite{duran2024} for initial data in Sobolev spaces, and the result is reformulated as follows. 
	\begin{lemma}[Well-posedness for Rosenau-Kawahara-RLW]\label{lemma: well-posedness for RKRLW}
		Assume $s\geq0$. Let $u_{0}\in H^{s}(\mathbb{R})$. Then, there exists $T>0$ and a unique solution $v \in X^{s}:=C(I,H^{s}(\mathbb{R}))$ for  initial value $v_{0}\in H^{s}(\mathbb{R})$, where the norm on $X^{s}$  is defined as 
		\begin{equation*}
			\Vert v \Vert_{X^{s}} = \mathop{sup}\limits_{t \in I } \Vert v \|_{H^{s}(\mathbb{R})}. \nonumber
		\end{equation*}
	\end{lemma}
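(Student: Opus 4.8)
The plan is to treat the reformulation \eqref{eq:R-K-RLW rewritten with L inverse} as a semilinear evolution equation on $H^s(\mathbb{R})$ and to close a contraction-mapping argument through Duhamel's formula. The structural observation that makes this work is that the kernel $k$ turns the top-order linear terms into a \emph{skew-adjoint} generator while simultaneously \emph{smoothing} the nonlinearity.

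First I would split $F$ into its linear and nonlinear parts and record the Fourier symbols. The linear flow $v_t + k*(a v_x + \kappa v_{xxx} - \mu v_{xxxxx}) = 0$ has, in Fourier variables, the symbol
\begin{equation*}
  \mathcal{F}\big(k*(a\partial_x + \kappa\partial_x^3 - \mu\partial_x^5)\big)(\xi) = \frac{i\,(a\xi - \kappa\xi^3 - \mu\xi^5)}{1 + \alpha\xi^2 + \beta\xi^4} =: iP(\xi),
\end{equation*}
with $P$ real-valued. Hence the propagator $W(t) := e^{-itP(D)}$ is a Fourier multiplier of modulus one and defines a strongly continuous \emph{unitary group} on every $H^s(\mathbb{R})$. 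This is exactly the point that rescues the apparent one-derivative loss produced by $\mu v_{xxxxx}$: although $P$ grows linearly in $|\xi|$, only its imaginary exponential enters the flow, so no derivatives are lost at the linear level and $\|W(t)f\|_{H^s}=\|f\|_{H^s}$.

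Next I would control the nonlinear term $N(v) := k*(b v^m v_x) = \tfrac{b}{m+1}(k*\partial_x)(v^{m+1})$. The multiplier of $k*\partial_x$ is $i\xi/(1+\alpha\xi^2+\beta\xi^4)$, which is bounded on $\mathbb{R}$ and decays at infinity; thus $k*\partial_x$ is bounded on $H^s$ and in fact regularizing. Combined with the Sobolev product estimates — the algebra property of $H^s$ for $s>\tfrac12$, and bilinear Kato--Ponce type estimates together with the smoothing of $k*\partial_x$ for $0\le s\le\tfrac12$ — this would yield the local Lipschitz bound
\begin{equation*}
  \|N(v)-N(w)\|_{H^s} \le C\big(\|v\|_{H^s}+\|w\|_{H^s}\big)^m\,\|v-w\|_{H^s}
\end{equation*}
on balls of $H^s(\mathbb{R})$.

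With these two ingredients I would run the standard fixed-point scheme on the mild formulation
\begin{equation*}
  v(t) = W(t)v_0 - \int_0^t W(t-\tau)\,N\big(v(\tau)\big)\,\mathrm{d}\tau.
\end{equation*}
By the isometry of $W(t)$ and the Lipschitz bound for $N$, the right-hand side maps a closed ball of $X^s=C(I,H^s(\mathbb{R}))$ into itself and is a contraction there once $T=T(\|v_0\|_{H^s})>0$ is chosen small; the Banach fixed-point theorem then produces the unique local solution $v\in X^s$, and the same estimates give uniqueness and continuous dependence on the datum. I expect the genuine difficulty to lie in the nonlinear estimate in the low-regularity range $0\le s\le\tfrac12$, where $H^s$ fails to be an algebra and one must really exploit the smoothing of $k$ (strongest when $\beta>0$) to absorb the derivative in $v^m v_x$; for $s>\tfrac12$ the estimate reduces to the algebra property and is routine.
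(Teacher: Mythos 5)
The paper does not actually prove this lemma: it simply cites the well-posedness theory of Dur\'an and Muslu \cite{duran2024} and restates the result, so there is no in-paper argument to match yours against. Your Duhamel-plus-contraction scheme --- writing the equation as $v_t+k*F(v,\partial_x)=0$, observing that the linear part generates a unitary group $e^{-itP(D)}$ on every $H^s$ because $P(\xi)=(a\xi-\kappa\xi^3-\mu\xi^5)/(1+\alpha\xi^2+\beta\xi^4)$ is real, and using the boundedness/smoothing of $k*\partial_x$ to make $N(v)=\tfrac{b}{m+1}(k*\partial_x)(v^{m+1})$ locally Lipschitz --- is the standard and correct route for such regularized (BBM/Rosenau-type) equations, and is in the spirit of the cited reference. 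For $s>\tfrac12$ your argument is complete as sketched.

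Two caveats are worth making explicit. First, your argument uses strictly more than the invertibility of $L$: if $\alpha=\beta=0$ then $L=\mathrm{Id}$ is still invertible but $k*\partial_x$ has the unbounded symbol $i\xi$ and the whole scheme collapses (the equation degenerates to a fifth-order KdV/Kawahara-type equation requiring dispersive estimates). So the hypothesis you actually need is $\alpha>0$ or $\beta>0$, which is relevant because the paper's remark after the lemma suggests mere non-negativity suffices. Second, the Lipschitz bound you assert for $0\le s\le\tfrac12$ is not established and cannot hold for all $m$ by smoothing alone: at $s=0$ one has $v^{m+1}\in L^{2/(m+1)}(\mathbb{R})\hookrightarrow H^{-m/2}(\mathbb{R})$, while $k*\partial_x$ gains at most three derivatives (when $\beta>0$, only one when $\beta=0<\alpha$), so the naive estimate closes only for $m\le 6$ (resp.\ $m\le 2$). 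You correctly identify this range as the genuine difficulty, but as written the proof is complete only for $s>\tfrac12$, or for low $s$ with restricted $m$; in the remaining cases you would have to either quote \cite{duran2024} as the paper does or supply a finer product estimate.
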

	Note that it is assumed that  $\alpha^{2}<4 \beta$ in \cite{duran2024} to guarantee the invertibility of $L$. However, the non-negativity of $\alpha$ and $\beta$ in our assumption is enough to  ensure this.   Therefore, when $s$ is selected to be sufficiently large, the existence and uniqueness of the classical solutions are guaranteed.
	
	\medskip 
	
	Our theorem for classical solutions  is stated as follows.
	\begin{theorem}\label{theorem:RKR}
		Assume the generalized Rosenau-Kawahara-RLW equation \eqref{eq:R-K-RLW rewritten with L inverse} admits at most one classical solution $v(t,x), (t,x) \in I\times \mathbb{R}$, for given initial data $v_0(x) = v(0,x)$. If $v(t,x)$ is  symmetric solution, then it is a steady solution with propagation speed $\dot{\lambda}(0)$.
	\end{theorem}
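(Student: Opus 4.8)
The plan is to localise the whole argument at the initial time $t=0$, extract from the symmetry the exact value of $v_t(0,\cdot)$, recognise it as a transport relation that forces a traveling-wave profile equation, and then build an \emph{explicit} traveling solution sharing the datum $v_0:=v(0,\cdot)$ with $v$, so that the two coincide by the assumed uniqueness. Two structural features of \eqref{eq:R-K-RLW rewritten with L inverse} drive everything. First, every term of $F(v,\partial_x)=av_x+bv^mv_x+\kappa v_{xxx}-\mu v_{xxxxx}$ carries an odd number of spatial derivatives (writing $bv^mv_x=\tfrac{b}{m+1}\partial_x(v^{m+1})$), so $F$ \emph{reverses parity} under a spatial reflection: with $g_\lambda(x):=g(2\lambda-x)$ one has $F(g_\lambda,\partial_x)=-\big(F(g,\partial_x)\big)_\lambda$. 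Second, the kernel $k$ has the even symbol $(1+\alpha\xi^2+\beta\xi^4)^{-1}$, hence $k$ is even and $k*(\cdot)$ \emph{commutes} with reflection about any axis, $k*(g_\lambda)=(k*g)_\lambda$. Consequently the full right-hand side $N(g):=-k*F(g,\partial_x)$ satisfies $N(g_\lambda)=-\big(N(g)\big)_\lambda$.

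The first concrete step is this: at $t=0$ the profile $v(0,\cdot)$ is symmetric about $\lambda(0)$, so by the parity relation $N(v(0,\cdot))=v_t(0,\cdot)$ is \emph{anti-symmetric} about $\lambda(0)$, i.e. $v_t(0,2\lambda(0)-x)=-v_t(0,x)$. On the other hand, differentiating the symmetry identity $v(t,x)=v(t,2\lambda(t)-x)$ in $t$ and evaluating at $t=0$ gives $v_t(0,x)=v_t(0,2\lambda(0)-x)+2\dot\lambda(0)\,v_x(0,2\lambda(0)-x)$. Combining these with the spatial relation $v_x(0,2\lambda(0)-x)=-v_x(0,x)$ (from differentiating the symmetry in $x$) collapses everything to the transport relation $v_t(0,\cdot)=-\dot\lambda(0)\,v_x(0,\cdot)$. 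Feeding this back into the equation $v_t(0,\cdot)=-k*F(v_0,\partial_x)$ yields the traveling-wave profile equation $\dot\lambda(0)\,v_0'=k*F(v_0,\partial_x)$, in which both the speed $c:=\dot\lambda(0)$ and the profile $v_0$ are now determined explicitly by the symmetric structure.

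Finally I would define the candidate $w(t,x):=v_0(x-ct)$ with $c=\dot\lambda(0)$. Since $F$ has constant coefficients and $k*(\cdot)$ both commute with spatial translations, $k*F(w(t,\cdot),\partial_x)(x)=\big(k*F(v_0,\partial_x)\big)(x-ct)=c\,v_0'(x-ct)=-w_t(t,x)$ by the profile equation, so $w$ is a classical solution of \eqref{eq:R-K-RLW rewritten with L inverse} with $w(0,\cdot)=v_0=v(0,\cdot)$. The hypothesis that the equation admits at most one classical solution for this datum then forces $v\equiv w$, i.e. $v(t,x)=v_0(x-\dot\lambda(0)t)$, a steady solution of propagation speed $\dot\lambda(0)$; the degenerate case $v_0'\equiv0$ (constant, hence trivially steady) is immediate. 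I expect the main obstacle to be the rigorous justification of the two parity/commutation properties of the \emph{nonlocal} operator $k*(\cdot)$ within the tempered-distribution framework: one must verify that bounded classical solutions genuinely lie in $\mathcal{S}'$, that $k$ — singular at the origin yet integrable — acts on them so that the convolution commutes with reflection about an arbitrary axis and with translations exactly as claimed, and that the spatial derivatives in $F$ are continuous up to the order needed for these manipulations to hold pointwise.
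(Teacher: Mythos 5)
Your argument is correct, but it follows a genuinely different route from the paper's proof of Theorem \ref{theorem:RKR}. You localize everything at $t=0$: the parity reversal of $F$ under reflection together with the evenness of $k$ gives the antisymmetry of $v_t(0,\cdot)$ about $\lambda(0)$, which combined with the $t$-derivative of the symmetry identity yields the transport relation $v_t(0,\cdot)=-\dot\lambda(0)v_x(0,\cdot)$, hence the profile equation $\dot\lambda(0)v_0'=k*F(v_0,\partial_x)$; you then build the candidate $w(t,x)=v_0(x-\dot\lambda(0)t)$ and conclude by the assumed uniqueness. This is precisely the scheme of \cite{Ehrnstroem2009}, and it is essentially what the paper itself does in the weak setting (Lemma \ref{RK-lemma} and Theorem \ref{the weak theorem:R-K-RLW}). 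The paper's proof of Theorem \ref{theorem:RKR} instead exploits the symmetry at \emph{all} times: comparing the reflected equation with the original one splits it into the two constraints \eqref{RK-RLW:linear part}--\eqref{RK-RLW:nonlinear part}, the transport constraint forces $v(t,x)=g(x-\lambda(t))$, and evaluating the steady constraint at two times along the same characteristic value $\tilde X$ forces $\dot\lambda(t_1)=\dot\lambda(t_2)$, so the constancy of the speed and the shape of the profile are read off directly from the symmetry without invoking uniqueness in the final identification (this explicitness is the paper's stated reason for following \cite{MR3603270}). What your version buys is economy of hypotheses on the symmetry (only the identity and its $t$-differentiability near the initial time are used) at the price of leaning on well-posedness; what the paper's version buys is a self-contained determination of $\dot\lambda\equiv\dot\lambda(0)$ and of $g$ from the symmetric structure. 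Your reflection/commutation identities for $k*(\cdot)$ are exactly the paper's \eqref{eq:convolution with derivatives and symmetry}, so the analytic caveats you flag are no more severe than those already accepted in the paper.
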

	\begin{proof} 
		Assume that the generalized Rosenau-Kawahara-RLW equation \eqref{eq:R-K-RLW rewritten with L inverse} has a symmetric classical solution, which satisfies $v(t,x)=v(t,2\lambda(t)-x)$ for some function 
		$\lambda(t) \in C^1(\mathbb{R})$. Then we have
		\begin{align}
			v_{t}(t,x)&=(v_{t}+2\dot{\lambda}(t)v_{x})(t,2\lambda(t)-x),\label{R_K_RLW:partial_t}\\
			\partial^{n}_{x}v(t,x)&=(-1)^{n} (\partial^{n}_{x}v) (t,2\lambda(t)-x), \quad n \in \mathbb{Z}^{+} \label{R_K_RLW:partial_x}.
		\end{align}
		In addition, for smooth, bounded function $f:I\times\mathbb{R}\to \mathbb{R}$ that is spatially symmetric with respect to $x=\lambda(t)$, we get from \eqref{R_K_RLW:partial_x}  that 
		\begin{equation}\label{eq:convolution with derivatives and symmetry}
			[k*(\partial_{x}^{n}f)](t,x)=\int_{\mathbb{R}}k(y)\partial_{x}^{n}f(t,x-y)dy=(-1)^{n}[k*(\partial_{x}^{n}f)](t,2\lambda(t)-x) 
		\end{equation}
		for $n\in \mathbb{N}\cup\{0\}$.
		Inserting \eqref{R_K_RLW:partial_t} - \eqref{eq:convolution with derivatives and symmetry}  into \eqref{eq:R-K-RLW rewritten with L inverse}, we get
		\begin{equation}\label{z}
			[v_t+2\dot{\lambda} v_x-k*F(v,\partial_{x})]\vert_{(t,2\lambda(t)-x)}=0.
			\nonumber
		\end{equation}
		In view of the arbitrariness of $(t,x) \in I \times \mathbb{R}$, we get from the above equation that 
		\begin{equation}\label{eqlam}
			v_t+2\dot{\lambda} v_x-k*F(v,\partial_{x})=0.
		\end{equation}
		The comparison of \eqref{eq:R-K-RLW rewritten with L inverse} with \eqref{eqlam} gives
		\begin{align}
			v_{t}+ \dot{\lambda} v_x&=0, \label{RK-RLW:linear part} \\
			\dot{\lambda} v_x-k*F(v,\partial_{x})&=0.\label{RK-RLW:nonlinear part}
		\end{align}
		Note that a solution to \eqref{RK-RLW:linear part} has the form
		\begin{equation}\label{RK-RLW:g}
			v(t,x)=g(x-\lambda(t)) =:g(X)
		\end{equation}
		for some function $g$ with sufficient smoothness. For convenience, we denote by $g^{\prime}(X):=\frac{d}{dX}g(X)$. Then, we have $\partial_{x}v(t,x)=g^{\prime}(X)$ and $\partial_{t}v(t,x)=-\dot{\lambda}g^{\prime}(X)$.	 Now, for any $(t_{1},x_{1})\in I \times \mathbb{R}$, and any $t_{2} \in I$, we can choose $x_{2} \in \mathbb{R} $ such that 
		\begin{equation*}
			x_{1}-\lambda(t_{1})=x_{2}-\lambda(t_{2}) =: \tilde{X}.
		\end{equation*}
		Inserting \eqref{RK-RLW:g} into \eqref{RK-RLW:nonlinear part}, and evaluating the latter at $(t_{1},x_{1})$ and $(t_{2},x_{2})$, respectively, we get
		\begin{align*}
			& \dot{\lambda}(t_{1}) g^{\prime}(\tilde{X})-[k*F(g,\partial_{x})](\tilde{X})=0, \\
			& \dot{\lambda}(t_{2}) g^{\prime}(\tilde{X})-[k*F(g,\partial_{x})](\tilde{X})=0.
		\end{align*}
		Comparing the two equations above, we get 
		\begin{equation}
			(\dot{\lambda}(t_{1})-\dot{\lambda}(t_{2}))g^{\prime}(\tilde{X})=0.
			\nonumber
		\end{equation}
		Due to the arbitrariness of $\tilde{X}$, we have $g^{\prime}(\tilde{X})  \not\equiv  0$ for nontrivial initial data $v(0,x)=g(x-\lambda(0))$. In this way, $\dot{\lambda}(t)$ is a constant for any $t \in I$. Therefore, the solution in  \eqref{RK-RLW:g} takes the form $v(t,x)=g(x-ct)$ for some constant $c$ which can be taken as $\dot{\lambda}(0)$. Hence, the classical symmetric solution to \eqref{eq:R-K-RLW rewritten with L inverse} must be a traveling wave solution. 
	\end{proof}
	\subsection{Specific models in the Rosenau-Kawahara-RLW family}\label{sect:specific models}
	
	In this section, we give a list of concrete models that are specific examples of the generalized Rosenau-Kawahara-RLW equation \eqref{eq:org-R-K-RLW}. It is worth mentioning that there is no rigorous derivation of these model equations yet, and we recommend the review in \cite{Pdswwrsm} and references there for more information on those equations. 
	To make the picture complete, we include briefly the background and physically relevant solutions of these equations.
	\medskip
	
	When the fifth-order mixed derivative term $v_{xxxxt}$ is included, the RLW  equation and the KdV equation extend separately to the   (generalized) Rosenau-RLW equation 
	\begin{equation}\label{eq:R-RLW}
		v_{t}+a v_{x}+(v^{m+1})_{x} - \alpha v_{xxt}+\beta v_{xxxxt}=0
	\end{equation}
	and the generalized Rosenau-KdV equation 
	\begin{equation}\label{eq:R-KdV}
		v_{t}+a v_{x}+b v^{m} v_{x} + \kappa v_{xxx}+\beta v_{xxxxt}=0.
	\end{equation}
	The combination of  \eqref{eq:R-RLW} and \eqref{eq:R-KdV}
	gives rise to the Rosenau-KdV-RLW equation 
	\begin{equation*}\label{kdvrlw}
		v_t+a v_x +bv^m v_x +\kappa v_{xxx}-\alpha v_{xxt}+\beta v_{xxxxt} = 0.
	\end{equation*}
	This equation describes shallow water waves under equal-width conditions, and allows for solitary wave solutions and shock waves through the  semi-contravariant principle. By including the fifth-order spatial derivative $v_{xxxxx}$ in  the generalized Rosenau-KdV \eqref{eq:R-KdV}, Biswas \cite{biswas2011bright}  considers the generalized Rosenau-Kawahara equation
	\begin{equation*}\label{RK}
		v_t+a v_x +bv^m v_x +\kappa v_{xxx}+\beta v_{xxxxt}-\mu v_{xxxxx} = 0,
	\end{equation*}
	and confirms the existence of bright and dark soliton solutions. The case $m=1$ corresponds to the Rosenau-Kawahara equation.

	\medskip

	\section{Symmetric solutions in weak formulation for the generalized Rosenau-Kawahara-RLW}\label{sec:R-K-RLW_weak}
	
	In this section, we consider  steady structure of symmetric solutions to the generalized Rosenau-Kawahara-RLW equation \eqref{eq:R-K-RLW} in weak formulation. As mentioned in Lemma \ref{lemma: well-posedness for RKRLW}, for initial value conditions in $H^{s}$ belonging to $s \geq 0$, \cite{duran2024} has given the existence and uniqueness of solutions to the equation \eqref{eq:R-K-RLW}. When $s \in [0,6]$, the solution to the equation \eqref{eq:R-K-RLW} is a weak solution in the distribution sense, so the existence and uniqueness of the weak solution is guaranteed.     
	We first introduce the  definition of weak solutions for general initial data in $L^{2}(\mathbb{R})$ setting.

	\begin{definition}\label{weak-def:R-K-RLW}
		A function $v(t,x) \in C(I,L^{2}(\mathbb{R})) $ is called a weak solution of the generalized  Rosenau-Kawahara-RLW equation \eqref{eq:R-K-RLW} if $v$ satisfies 
		\begin{equation} \label{weak-eq: R-K-RLW}
			\iint_{I \times \mathbb{R}} v L(\phi_{t}) +\left(a v + \frac{b}{m+1} v^{m+1}\right) \phi_{x} + \kappa v \phi_{xxx}-\mu v \phi_{xxxxx} \mathrm{d}t\mathrm{d}x=0,
		\end{equation}
		for all $\phi \in C^{\infty}_{c}(I \times \mathbb{R})$.
	\end{definition}
	For convenience, we apply the bracket notation for distributions so that $\langle v,\phi \rangle=\iint_{I \times \mathbb{R}} v \phi \mathrm{d}t \mathrm{d}x$ for $v$ and $\phi$ in Definition \ref{weak-def:R-K-RLW}. To describe the steadiness of the weak solutions, we give the following lemma.  
	\begin{lemma}\label{RK-lemma}
		If $V\in  L^{2}(\mathbb{R}) $ satisfies
		\begin{equation}\label{equation lemma}
			\int_{\mathbb{R} } -c V L(  \psi_{x}) +(a V + \frac{b}{m+1} V^{m+1}+1) \psi_{x} +\kappa V \psi_{xxx}-\mu V \psi_{xxxxx} \mathrm{d}x=0,
		\end{equation}
		for all $\psi \in C^{\infty}_{c}(\mathbb{R} ) $, then $v$ given by 
		\begin{equation} \label{eq:weak solution}
			v(t,x)=V(x-c(t-t_0))
		\end{equation}
		is a weak solution of the generalized  Rosenau-Kawahara-RLW equation \eqref{eq:R-K-RLW} for any $t_0 \in I$.
	\end{lemma}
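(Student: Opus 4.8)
The plan is to substitute the traveling ansatz \eqref{eq:weak solution} directly into the weak formulation \eqref{weak-eq: R-K-RLW} and to reduce the space-time integral, on each fixed time slice, to the profile identity \eqref{equation lemma}. The guiding principle throughout is that in the weak setting $V$ need not be differentiable, so every derivative must stay on the test function $\phi$; no integration by parts that lands a derivative on $V$ is permitted.

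First I would treat the only delicate term, $\iint_{I\times\R} v\,L(\phi_t)\,\dd t\,\dd x$. Since $v(t,x)=V(x-c(t-t_0))$ is constant along the characteristics $x-c(t-t_0)=\text{const}$, it satisfies the distributional transport identity $\partial_t v=-c\,\partial_x v$, which holds for mere translates in $L^2$ and requires no regularity of $V$. Because $L=1-\alpha\partial_x^{2}+\beta\partial_x^{4}$ commutes with $\partial_t$ and maps $C_c^\infty$ into itself, testing this identity against $L\phi$ yields
\begin{equation*}
\iint_{I\times\R} v\,L(\phi_t)\,\dd t\,\dd x = -c\iint_{I\times\R} v\,L(\phi_x)\,\dd t\,\dd x .
\end{equation*}
Equivalently, one changes variables to $(\tau,\xi)=(t,\,x-c(t-t_0))$; the $\partial_\tau$-part of $\phi_t$ then integrates to zero in $\tau$ by compact support, leaving exactly the $-cL(\phi_x)$ contribution.

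Next I would carry out the change of variables $\xi=x-c(t-t_0)$, $\tau=t$ (Jacobian one) in the entire resulting integral, so that $v=V(\xi)$ and, at fixed $\tau$, every spatial derivative $\partial_x^{n}\phi$ becomes $\partial_\xi^{n}$ of the translated test function $\psi^{(\tau)}(\xi):=\phi(\tau,\xi+c(\tau-t_0))$, which lies in $C_c^\infty(\R)$ for each $\tau\in I$. The inner $\xi$-integral then reads
\begin{equation*}
\int_{\R}\Big[-cV\,L\big((\psi^{(\tau)})'\big)+\big(aV+\tfrac{b}{m+1}V^{m+1}\big)(\psi^{(\tau)})'+\kappa V(\psi^{(\tau)})'''-\mu V(\psi^{(\tau)})^{(5)}\Big]\dd\xi .
\end{equation*}
Adding and subtracting the term $(\psi^{(\tau)})'$ identifies this with the left-hand side of \eqref{equation lemma} tested against $\psi=\psi^{(\tau)}$, up to the correction $-\int_{\R}(\psi^{(\tau)})'\,\dd\xi$, which vanishes because $\psi^{(\tau)}$ has compact support. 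By hypothesis \eqref{equation lemma} the profile integral is zero, so the inner integral vanishes for every $\tau\in I$; integrating in $\tau$ over $I$ then produces \eqref{weak-eq: R-K-RLW}, as desired.

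The hard part will be the first step: one must avoid differentiating the possibly singular profile $V$, which is precisely why the transport identity $\partial_t v=-c\,\partial_x v$ (or, equivalently, the characteristic change of variables) replaces a naive integration by parts in $t$. The remaining work is bookkeeping: justifying Fubini and the interchange of $L$ with the $\tau$-integration through the smoothness and compact support of $\phi$ (all products are integrable since $V\in L^{2}$ and the derivatives of $\phi$ are compactly supported), and noting that the constant term ``$+1$'' in \eqref{equation lemma} is immaterial, as it contributes only a total derivative in $\xi$.
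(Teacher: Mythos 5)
Your proposal is correct and follows essentially the same route as the paper: the paper likewise transfers everything onto the translated test function $\varphi_c(t,x)=\varphi(t,x+c(t-t_0))$, uses $(\varphi_c)_t=(\varphi_t)_c+c(\varphi_x)_c$ so that the pure $\partial_t$ contribution integrates to zero in $t$ (your distributional transport identity $\partial_t v=-c\,\partial_x v$ is the same computation in a different order), and then applies \eqref{equation lemma} with $\psi=\varphi_c(t,\cdot)$ for each fixed $t$. Your observation that the stray ``$+1$'' in \eqref{equation lemma} is harmless because $\int_{\mathbb{R}}\psi_x\,\mathrm{d}x=0$ is a correct and worthwhile addition that the paper's proof silently omits.
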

	
	\begin{proof}
		By using the Fourier transform, It is clear that the translation map $ a \mapsto V(x+a) $ is continuous from $ \mathbb{R} $ to $ L^{2}(\mathbb{R}) $. Since $t \mapsto c(t-t_{0})$ is real analytic, it thus follows that $v$ given by \eqref{eq:weak solution} belongs to $C(I , L^{2}(\mathbb{R}))$. By definition,   $v(t,x)$ satisfies
		\begin{equation*}
			\langle v,L(\varphi_{t}) \rangle+\langle a v +\frac{b}{m+1}  v^{m+1}   , \varphi_{x} \rangle + \langle \kappa v , \varphi_{xxx} \rangle-\langle \mu v, \varphi_{xxxxx} \rangle=0. 
		\end{equation*}
		It then suffices to prove that $v(t,x)$ satisfies \eqref{weak-eq: R-K-RLW}.
		For any function $\varphi \in C^{\infty}_{c}(\mathbb{R})$, we have that
		\begin{equation}\label{one equation}
			\langle v,\varphi \rangle=\langle V,\varphi_c \rangle, \quad \langle v^2,\varphi \rangle =\langle V^2,\varphi_c \rangle 
			, \quad \langle v_x^{2} ,\varphi \rangle =\langle V_x^{2},\varphi_c \rangle,
		\end{equation}
		where we denote $\varphi_c(t,x) = \varphi(t,x+c(t-t_{0}))$.
		It is clear that the following 
		\begin{equation}\label{two equation}
			(\varphi_c)_t = (\varphi_t)_c+ c(\varphi_x)_c,\quad (\varphi_c)_x = (\varphi_x)_c
		\end{equation}
		hold.
		With \eqref{one equation} and \eqref{two equation}, we obtain 
		\begin{equation*}
			\langle v,L(\varphi_{t})\rangle  = \langle V, L(\partial_t \varphi_c -c \partial_x \varphi_c)\rangle,\,  \langle \kappa v,\partial_x^3\varphi\rangle = \langle \kappa V,\partial_x^3\varphi_c \rangle,\,\langle \mu v,\partial_x^{5}\varphi\rangle = \langle \mu V,\partial_x^5\varphi_c\rangle
		\end{equation*}
		and
		\begin{equation*}
			\langle a v +\frac{b}{m+1}v^{m+1},\varphi_{x}\rangle =\langle a V+\frac{b}{m+1}V^{m+1},\partial_x \varphi_c \rangle.
		\end{equation*}
		Since $V$ is independent of time, we conclude that 
		\begin{align}
			\langle V,L(\partial_t \varphi_c) \rangle &=\int_{ \mathbb{R} } V(x) \int_{I}L(\partial_t \varphi_c) \mathrm{d}t \mathrm{d}x \nonumber\\
			&=\int_{ \mathbb{R} } V(x)L(\varphi_c(T,x)-\varphi_c(0,x)) \mathrm{d}x =0.
		\end{align}
		Collecting the above results, we find 
		\begin{align}
			&\langle v,L(\varphi_{t}) \rangle+\langle a v+\frac{b}{m+1}  v^{m+1}  , \varphi_{x} \rangle +  \langle \kappa v , \varphi_{xxx} \rangle- \langle \mu v, \varphi_{xxxxx} \rangle \nonumber \\
			=&\langle V,L(\partial_t\varphi_c-c\partial_x\varphi_c)\rangle+\langle a V+\frac{b}{m+1}V^{m+1},\partial_x\varphi_c \rangle + \langle \kappa V,\partial_x^3\varphi_c\rangle - \langle \mu V,\partial_x^5\varphi_c\rangle \nonumber \\
			=&\iint_{I  \times \mathbb{R}} [-cV(x) L(\partial_x \varphi_c)+( a V(x)+\frac{b}{m+1}V^{m+1}(x))
			\partial_x\varphi_c\nonumber \\
			&+\kappa V(x)\partial_x^3\varphi_c-\mu V(x)\partial_x^5\varphi_c ]\mathrm{d}t\mathrm{d}x\\ 
			=&0,
			\nonumber
		\end{align}
		where in the last equality we used \eqref{equation lemma} by taking $\psi(x)=\varphi_c(t,x)$ which belongs to $C^{\infty}_{c}(\mathbb{R})$ for each fixed $t \in I$. The lemma then follows directly.
	\end{proof}
	We now state the theorem for  the a priori symmetry of  weak solutions  to the generalized
	Rosenau-Kawahara-RLW equation. 
	\begin{theorem}\label{the weak theorem:R-K-RLW}
		Let $v$ be a weak solution of the generalized Rosenau-Kawahara-RLW  equation \eqref{eq:R-K-RLW} with initial data $v_{0}(x)=v(t_{0},x)$ such that the equation is locally well-posed in Lemma \ref{lemma: well-posedness for RKRLW}. If $v(t,x)$ is symmetric, then it is a steady solution with speed $\dot{\lambda}(t_{0})$.
	\end{theorem}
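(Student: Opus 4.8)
The plan is to reduce the statement to the stationary weak identity \eqref{equation lemma} and then close by Lemma \ref{RK-lemma} together with uniqueness. Set $V:=v(t_{0},\cdot)\in L^{2}(\mathbb{R})$ and $c:=\dot{\lambda}(t_{0})$. Once we know that $V$ satisfies \eqref{equation lemma} with this value of $c$, Lemma \ref{RK-lemma} furnishes the traveling wave $w(t,x)=V(x-c(t-t_{0}))$ as a weak solution of \eqref{eq:R-K-RLW} with $w(t_{0},\cdot)=V=v(t_{0},\cdot)$; the local well-posedness in Lemma \ref{lemma: well-posedness for RKRLW} then forces $v=w$, which is precisely the assertion that $v$ is steady with speed $\dot{\lambda}(t_{0})$. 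Thus everything reduces to verifying that the profile $V$ solves \eqref{equation lemma}.

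To exploit the symmetry in weak form, for $\phi\in C_{c}^{\infty}(I\times\mathbb{R})$ I would introduce the reflected test function $\phi_{\lambda}(t,x):=\phi(t,2\lambda(t)-x)$ and use the commutation rules that reflection commutes with the even-order operator $L$, anticommutes with each $\partial_{x}$, and satisfies $\partial_{t}\phi_{\lambda}=(\phi_{t})_{\lambda}+2\dot{\lambda}(\phi_{x})_{\lambda}$. Since $v$ and every power $v^{m+1}$ inherit the spatial symmetry, the measure-preserving change of variables $x\mapsto 2\lambda(t)-x$ gives $\langle v,g_{\lambda}\rangle=\langle v,g\rangle$, and likewise with $v$ replaced by $v^{m+1}$ or by $\dot{\lambda}v$. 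Substituting $\phi_{\lambda}$ into \eqref{weak-eq: R-K-RLW} and transferring the reflection back onto $v$ through these identities produces a second weak identity for $v$; upon adding it to \eqref{weak-eq: R-K-RLW} the nonlinear, third- and fifth-order terms cancel, leaving the transport identity $\langle Lv,\phi_{t}+\dot{\lambda}\phi_{x}\rangle=0$ for all admissible $\phi$, i.e. the weak form of $\partial_{t}(Lv)+\dot{\lambda}\,\partial_{x}(Lv)=0$.

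From this transport identity, and because the coefficient $\dot{\lambda}(t)$ is independent of $x$, the distribution $Lv$ is rigidly translated: the map $t\mapsto (Lv)\bigl(t,\cdot+(\lambda(t)-\lambda(t_{0}))\bigr)$ is constant in time, so $Lv(t,x)=(LV)\bigl(x-(\lambda(t)-\lambda(t_{0}))\bigr)$, and applying $L^{-1}$ (convolution with $k$, which commutes with translations) yields $v(t,x)=V\bigl(x-(\lambda(t)-\lambda(t_{0}))\bigr)$. Substituting this form back into \eqref{weak-eq: R-K-RLW} and changing variables $x\mapsto x-\lambda(t)+\lambda(t_{0})$ turns the identity into one for the time-independent profile $V$; testing against products $\chi(t)\psi(x)$ then separates the variables and forces $\dot{\lambda}(t)$ to equal the constant $c=\dot{\lambda}(t_{0})$ wherever $\int_{\mathbb{R}}V\,L(\psi_{x})\,\mathrm{d}x\neq0$, a condition met by some $\psi$ as soon as $V$ (equivalently $LV$) is nonconstant, i.e. for nontrivial data. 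With $\dot{\lambda}\equiv c$ the same identity collapses exactly to \eqref{equation lemma} for $V$, completing the reduction.

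The main obstacle is rigor at low regularity. First, since only $\lambda\in C^{1}$ is available, $\phi_{\lambda}$ is merely $C^{1}$ in time and is not a priori an admissible test function, so I would first extend the validity of \eqref{weak-eq: R-K-RLW} to compactly supported test functions that are $C^{1}$ in $t$, using the density of $C_{c}^{\infty}$ and the continuity of every term in $\phi$ guaranteed by $v\in C(I,L^{2})$ (with $v^{m+1}$ controlled through the Sobolev regularity of Lemma \ref{lemma: well-posedness for RKRLW}). Second, both the rigid-translation conclusion for $Lv$ and the separation-of-variables step extract pointwise-in-time information from space-time integrated identities, and must be justified distributionally; here the $x$-independence of the transport coefficient keeps the translation step clean, while continuity of $\dot{\lambda}$ upgrades an almost-everywhere conclusion to everywhere. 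I expect the regularity of the symmetry axis to be the genuinely delicate point, the remaining ingredients being routine. If one is content to assume enough temporal regularity for \eqref{R_K_RLW:partial_t} to hold, the transport step can be bypassed: at $t_{0}$ the symmetry makes $v_{t}(t_{0},\cdot)+cV'$ symmetric about $\lambda(t_{0})$ while the equation makes $v_{t}(t_{0},\cdot)=-k*F(V,\partial_{x})$ antisymmetric, so $v_{t}(t_{0},\cdot)=-cV'$ and \eqref{equation lemma} follows at once.
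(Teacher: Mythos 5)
Your proposal is correct and shares the paper's skeleton---reflected test functions with the commutation rules, reduction to the stationary identity \eqref{equation lemma} for $V=v(t_{0},\cdot)$, and closure via Lemma \ref{RK-lemma} plus uniqueness---but the middle step takes a genuinely different route. From the pair consisting of \eqref{weak-eq: R-K-RLW} and its reflected counterpart \eqref{eq:rkrlw_psi}, the paper takes the combination in which the time-derivative terms cancel, obtaining the weak steady identity \eqref{eq:RKRLW-inner} directly, and then localizes at $t=t_{0}$ by testing against $\psi(x)\rho_{\varepsilon}(t)$ with $\rho_{\varepsilon}\to\delta_{t_{0}}$, using $v\in C(I,L^{2}(\mathbb{R}))$ to pass to the limit; this yields \eqref{equation lemma} with $c=\dot{\lambda}(t_{0})$ in one stroke. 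You take the complementary combination, the transport identity, deduce that $Lv$ (hence $v$) is the rigid translate $V(x-\lambda(t)+\lambda(t_{0}))$, and only then substitute back and separate variables to force $\dot{\lambda}$ constant---the faithful weak analogue of the classical proof of Theorem \ref{theorem:RKR}. Your route buys the intermediate structural fact that a symmetric weak solution translates along its axis before the speed is known to be constant, and it makes the nontriviality condition explicit; it costs two extra distributional steps that the paper's mollifier-in-time argument avoids. Your point about admitting test functions only $C^{1}$ in $t$ is well taken: the paper silently enlarges the test class to $C^{1}_{c}(I,C^{\infty}_{c}(\mathbb{R}))$ when asserting the bijectivity of $T_{\lambda}$, and your density argument is the right repair. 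The closing shortcut via antisymmetry of $k*F(V,\partial_{x})$ needs temporal regularity unavailable here and is rightly offered only as an aside.
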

	\begin{proof}
		Consider test functions $\phi \in C^{\infty}_{c}(I \times \mathbb{R})$. 
		We first claim that $T_{\lambda}: \phi \mapsto \phi_{\lambda}=\phi(t,2\lambda(t)-x)$ is a bijection on $C^{1}_{c}(I,C^{\infty}_{c}(\mathbb{R}))$. In fact, since $\lambda(t) \in C^1(\mathbb{R})$, it is clear that $T_{\lambda}$ maps $C^{1}_{c}(I,C^{\infty}_{c}(\mathbb{R}))$ into itself. Moreover, for any $\phi \in C^{1}_{c}(I,C^{\infty}_{c}(\mathbb{R}))$, there exists $\Tilde{\phi}:=\phi(t,2\lambda-x)$ such that $T_{\lambda} \Tilde{\phi}=\phi$. This confirms the surjectivity of $T_{\lambda}$.
		Additionally, if $T_{\lambda}(\phi)=T_{\lambda}(\psi)$ for some $\phi,\psi \in C^{1}_{c}(I,C^{\infty}_{c}(\mathbb{R}))$, $\mathrm{ i.e., } \, \phi_{\lambda}=\psi_{\lambda}$, then $\phi=T_{\lambda}(\phi_{\lambda})=T_{\lambda}(\psi_{\lambda})=\psi $. This confirms the injectivity of $T_{\lambda}$.\\
		Let $v$ be a symmetric solution of \eqref{eq:R-K-RLW}, namely $v=v_{\lambda}$. 	By definition \ref{def:symmetric_function}, we can get 
		\begin{equation}\label{eq:weak one RKRLW}
			\langle v_{\lambda},\phi \rangle= \langle \phi_{\lambda},v \rangle,\quad \langle v^{m+1},\phi \rangle=\langle v^{m+1}_{\lambda},\phi \rangle=\langle v^{m+1},\phi_{\lambda} \rangle.
		\end{equation}
		Takeing $v$ to be $v_{\lambda}$ in \eqref{weak-eq: R-K-RLW} and inserting \eqref{eq:weak one RKRLW} into \eqref{weak-eq: R-K-RLW}, we get
		\begin{equation}\label{eq:inner RKRLW}
			\langle v,(L(\phi_{t}))_{\lambda} \rangle+\langle a v +\frac{b}{m+1}  v^{m+1}   , (\partial_{x} \phi)_{\lambda} \rangle + \langle \kappa v , (\partial_{x}^{3} \phi)_{\lambda} \rangle-\langle \mu v, (\partial_{x}^{5} \phi)_{\lambda} \rangle=0. 
		\end{equation}
		Note that for any $(t,x)\in I\times \mathbb{R}$, we have 
		\begin{equation*}
			\begin{split}
				(\phi_{\lambda})_{t}(t,x)&=[(\phi_{t})_{\lambda}-2 \dot{\lambda} (\phi_{x})_{\lambda}](t,x),\\
				(\phi_{\lambda})_{x}(t,x)&=-(\phi_{x})_{\lambda}(t,x),\\
				(\phi_{\lambda})_{y}(t,x)&=(\phi_{y})_{\lambda}(t,x),
			\end{split}
		\end{equation*}
		where $\dot{\lambda}$ denotes the time derivative  of $\lambda$.
		Therefore, we can rewrite  \eqref{eq:inner RKRLW} as 
		\begin{equation}\label{eq:rkrlw_lambda}
			\begin{split}
				0=&\langle v ,  (L(\phi_{\lambda}))_{t}-2 \dot{\lambda} (L(\phi_{\lambda}))_{x}  \rangle - \langle  a v +\frac{b}{m+1}  v^{m+1} , (\phi_{\lambda})_{x} \rangle  -  \langle \kappa v, (\phi_{\lambda})_{xxx} \rangle\\
				&+ \langle \mu u, (\phi_{\lambda})_{xxxxx}  \rangle  .
			\end{split}	
		\end{equation}
		Since the map $T_{\lambda}: \phi \rightarrow \phi_{\lambda}$ is a bijection in $ C^{1}_{c}(I,C^{\infty}_{c}(\mathbb{R}))$, we know that for any $\psi \in  C^{1}_{c}(I,C^{\infty}_{c}(\mathbb{R}))$
		there exists $\phi \in C^{1}_{c}(I,C^{\infty}_{c}(\mathbb{R}))$ such that $T_{\lambda}:\phi\mapsto \psi$, namely $\psi=\phi_{\lambda}$.
		Then, we conclude that \eqref{eq:rkrlw_lambda} actually holds with $\phi_{\lambda}$ replaced by $\psi$, and get 
		\begin{equation}\label{eq:rkrlw_psi}
			\langle v ,  \partial_{t}(L\psi)-2 \dot{\lambda} \partial_{x}(L\psi)  \rangle - \langle  a v +\frac{b}{m+1}  v^{m+1} , \psi_{x} \rangle  -  \langle \kappa v, \psi_{xxx} \rangle+ \langle \mu v, \psi_{xxxxx}  \rangle=0.
		\end{equation}
		Comparing \eqref{weak-def:R-K-RLW} with \eqref{eq:rkrlw_psi} and in view of the arbitrariness of $\phi,\psi \in C^{1}_{c}(I,C^{\infty}_{c}(\mathbb{R}))$, we can take $\psi$ to be $\phi$ in \eqref{eq:rkrlw_psi} to get 
		\begin{equation}\label{eq:RKRLW-inner}
			\langle v ,   \dot{\lambda} L(\phi_{x})  \rangle + \langle  a v +\frac{b}{m+1}  v^{m+1} , \phi_{x} \rangle  +  \langle \kappa v, \phi_{xxx} \rangle- \langle \mu v, \phi_{xxxxx}  \rangle=0.
		\end{equation}
		Now, we fix a $t_0 \in I$. For any $\psi \in C^{\infty}_{0}(\mathbb{R})$, we consider the sequence of functions $\phi_{\varepsilon}(t,x) = \psi(x) \rho_{\varepsilon}(t)$, where $\rho_{\varepsilon} \in C^{\infty}_{0}(I)$ is a mollifier with the property that $\rho_{\varepsilon} \rightarrow \delta(t-t_{0})$, the Dirac mass at $t_{0}$, as $\varepsilon \rightarrow 0$. Replacing $\phi$ by $\phi_{\varepsilon}$ in \eqref{eq:RKRLW-inner} we obtain 
		\begin{align}\label{eq:RKRLW with sequence}
			&\int_{ \mathbb{R}} (L \psi)_{x} \int_{I}  \dot{\lambda} v \rho_{\varepsilon}(t) \mathrm{d}t\mathrm{d}x +	\int_{ \mathbb{R}} \psi_{x}  \int_{I}  (a v +\frac{b}{m+1}  v^{m+1}) \rho_{\varepsilon}(t)\mathrm{d}t\mathrm{d}x \nonumber \\
			&+\int_{ \mathbb{R}}  \psi_{xxx}  \int_{I} \kappa v \rho_{\varepsilon}(t) \mathrm{d}t \mathrm{d}x-\int_{ \mathbb{R}}  \psi_{xxxxx}  \int_{I}  \mu v  \rho_{\varepsilon}(t) \mathrm{d}t \mathrm{d}x 
			=0.
		\end{align}
		Since by assumption we have that $v \in C(I,L^{2}(\mathbb{R}))$, we can get the following convergence results:
		\begin{equation*}
			\lim_{\varepsilon \to 0} \int_{I} v(t,x) \rho_{\varepsilon}(t) \mathrm{d}t = v(t_{0},x)
		\end{equation*}
		in $L^{2}(\mathbb{R})$, and 
		\begin{equation*}
			\lim_{\varepsilon \to 0} \int_{I} v^{m+1}(t,x) \rho_{\varepsilon}(t) \mathrm{d}t = v^{m+1}(t_{0},x)
		\end{equation*}
		in $L^1(\mathbb{R})$. Therefore, by letting $\varepsilon$ tend to zero, we derive from \eqref{eq:RKRLW with sequence} that $v(t_{0},x)$ satisfies Lemma \ref{RK-lemma} for $c=\dot{\lambda}(t_{0}) $ so that $\overline{v}(t,x)=v(t_{0},x-\dot{\lambda}(t_{0})(t-t_{0}))$ is a steady solution of \eqref{eq:R-K-RLW}. Since $\overline{v}(t_{0},x)=v(t_{0},x)$, the uniqueness of the solution implies that $\overline{v}(t,x)=v(t,x)$ for all time $t$. Therefore, $v$ is a steady solution with speed $\dot{\lambda}(t_{0})$.
	\end{proof}

	\section{Classification of symmetric steady solutions of the R-KDV-RLW equation}\label{sec:example}
	In this section, we consider the perturbed R-KdV-RLW equation \cite{Pdswwrsm}:
	\begin{equation}\label{eq:gkdv}
		v_t+a_{1} v_x+a_{2} v_{xxx}+a_{3} v_{xxt}+a_{4} v_{xxxxt}+a_{5}(v^n)_x=R,
	\end{equation}
	where $R$ denotes the perturbation terms given by
	\begin{align*}
		R&= b_{1} v+b_{2} v_{xx} +b_{3} v_x v_{xx}+b_{4} v^m v_x+b_{5} v v_{xxx}+b_{6} vv_x v_{xx} \\ 
		&+b_{7} v_x^3+b_{8} v_x v_{xxxx}+b_{9} v_{xx} v_{xxx}+b_{10} v_{xxxx}+b_{11} v_{xxxxx}+b_{12} vv_{xxxxx},
	\end{align*}
	where $m\in\{1,2,3,4\}$, $n >1$ is an integer,  the coefficients $a_{i} \in \mathbb{R} $,  $i=1,\ldots,5$, refer to  dispersion effect,  and the coefficients $b_{i} \in \mathbb{R}$, $i=1,\ldots,12$, refer to small  perturbations. In convention, $b_{1}$ evaluates the shoaling effect while $b_{2}, b_{10}$ evaluate the dissipative effect.
	When $b_{1}=b_{2}=b_{10}=0$, dissipation and shoaling effects are removed, and \eqref{eq:gkdv} allows for  1-soliton solution \cite{Pdswwrsm}  given by
	\begin{equation*}
		v(x,t)=A \mathrm{sech}^{\frac{4}{n-1}} [B(x-\tau t)],
	\end{equation*}
	where $A,B,\tau$ are determined by $m,n$ and  coefficients of \eqref{eq:gkdv}.
	This is a symmetric solitary traveling solution.  However, whether \eqref{eq:gkdv}  in general has a symmetric traveling  solution of solitary type is unknown previously. In this section, we give a negative answer for this question. In particular, we prove that \eqref{eq:gkdv} does not have symmetric solitary traveling solution as long as $b_{1}$, $b_{2}$ and $b_{10}$ do not vanish simultaneously. These three coefficients correspond to   perturbation terms of dissipation and shoaling types in \eqref{eq:gkdv}.  The result indicates that dissipation and shoaling effects make the symmetric structure and solitary structure  inconsistent for traveling solutions. 
	Note that a well-posedness result in Sobolev spaces is not available now due to the highly nonlinear complicated structure of this equation. So, the symmetry issue considered here is  a priori symmetry.
	\subsection{Non-existence of symmetric steady solutions of solitary type}
	We first introduce some quantities  which depend on the coefficients of the shoaling and  dissipative terms $b_{1}$, $b_{2}$ and $b_{10}$. These quantities  will be used to exclude the existence of symmetric traveling solutions of solitary type.   If $b_{1}$, $b_{2}$ and $b_{10}$ satisfy\footnote{The constraint on parameters $b_{1}$, $b_{2}$ and $b_{10}$ in \eqref{eq: the case for characterizing asymmetric solutions} is just a particular case considered in Theorem \ref{thm: no symmetric solitary solution} below.}    
	\begin{equation}\label{eq: the case for characterizing asymmetric solutions}
		b_{10} \neq 0 ,\quad b_{2}^{2}> 4b_{1}b_{10},\quad(\sqrt{b_{2}^2-4b_{1}b_{10}}-b_{2})b_{10}<0,
	\end{equation} then we define the positive quantities $\alpha_{1},\alpha_{1},\beta_{1},\beta_{2}$ by 
	\begin{equation}\label{def:alpha and beta}
		\begin{split}
			\alpha_{1}   
			&=\frac{\vert \sqrt{b_{2}^{2}-4b_{1}b_{10}}+b_{2} \vert}{2\sqrt{b_{1}b_{10}} }, \qquad \alpha_{2}
			=\frac{\vert \sqrt{b_{2}^{2}-4b_{1}b_{10}}-b_{2} \vert}{2\sqrt{b_{1}b_{10}}},\\
			\beta_{1}
			&=\sqrt{\frac{b_{2}+\sqrt{b_{2}^{2}-4b_{1}b_{10}}}{2b_{1}}}, \qquad \beta_{2}
			=\sqrt{\frac{b_{2}-\sqrt{b_{2}^{2}-4b_{1}b_{10}}}{2b_{1}}} .
		\end{split}
	\end{equation} 
	These quantities help to state the theorem in a concise form.
	In addition, we introduce quantities $c_{0},\tilde{c}_{1},\tilde{c}_{2},\tilde{c}_{3},\tilde{c}_{4}$ given by 
	\begin{align*}
		c_{0}&= \left[1- \sin(\frac{\pi}{2} \alpha_{1}) \sin ( \frac{\pi}{2}\alpha_{2} )   \right]\left[ 1+ \cos(\pi \alpha_{1}) \right]\\
		&+\left[\cos(\frac{\pi}{2} \alpha_{2}) + \cos(\frac{\pi}{2} \alpha_{1}) \sin(\frac{\pi}{2} \alpha_{2}) \right] \sin(\pi \alpha_{1}), \\
		\tilde{c}_{1}&=\left[ 1-\sin(\frac{\pi}{2} \alpha_{1}) \sin ( \frac{\pi}{2}\alpha_{2} )  \right] \left[ \cos( \pi \alpha_{1}) v_{0}(0)-v_{0}( \pi \beta_{1} )  \right] \\
		&+ \left[ \cos ( \frac{\pi}{2} \alpha_{1} ) \sin( \frac{\pi}{2}\alpha_{2}   )v_{0}(0)- \sin (  \frac{\pi}{2}\alpha_{2}) v_{0} (\frac{\pi}{2} \beta_{1}  )  + v_{0}( \frac{\pi}{2} \beta_{2} )   \right] \sin(\pi \alpha_1),\\
		\tilde{c}_{2}&= \left[ \cos ( \frac{\pi}{2} \alpha_{1} ) v_{0}(0) -v_{0}( \frac{\pi}{2}\beta_{1} ) \right] \left[ 1+ \cos( \frac{\pi}{2} \alpha_{2} ) \sin ( \pi \alpha_{1} ) \right]\\
		&-\left[ \sin ( \frac{\pi}{2}\alpha_{1} ) \cos (\frac{\pi}{2}\alpha_{2})v_{0}(0)+v_{0}( \frac{\pi}{2}\beta_{1} ) \right]\cos ( \pi \alpha_{1} )\\
		&+\left[ \cos(\frac{\pi}{2} \alpha_{1} ) + \sin(\frac{\pi}{2} \alpha_{1} )\cos( \frac{\pi}{2} \alpha_{2} )  \right]v_{0}(\pi \beta_{1}), \\
		\tilde{c}_{3}&=\left[ 1-\sin(\frac{\pi}{2} \alpha_{1}) \sin ( \frac{\pi}{2}\alpha_{2} )  \right] \left[v_{0}(0)+v_{0}(\pi \beta_{1})\right]\\
		&+\left[ \cos( \frac{\pi}{2}\alpha_{2} )v_{0}(0)+\sin (  \frac{\pi}{2}\alpha_{2})v_{0}( \frac{\pi}{2}\beta_{1} )-v_{0}( \frac{\pi}{2} \beta_{2} )  \right] \sin(\pi \alpha_{1}),\\
		\tilde{c}_{4}&=\left[\cos ( \pi \alpha_{1}) \cos( \frac{\pi}{2} \alpha_{2} )-\cos( \frac{\pi}{2} \alpha_{1} ) \sin ( \frac{\pi}{2}\alpha_{2} ) \right]v_{0}(0) \\
		&+ \left[ 1+\cos(\pi \alpha_{1}) \right] \left[\sin ( \frac{\pi}{2} \alpha_{2} ) v_{0}( \frac{\pi}{2} \beta_{1} )-v_{0}( \frac{\pi}{2} \beta_{2} )
		\right]\\
		&-\left[ \cos( \frac{\pi}{2} \alpha_{2} )+ \cos( \frac{\pi}{2} \alpha_{1} )\sin( \frac{\pi}{2} \alpha_{2} )  \right]v_{0}(\pi \beta_{1}).
	\end{align*}
	We introduce the set 
	\begin{equation}\label{def:set A}
		\mathcal{A}=\left\{(v_{0},b_{1},b_{2},b_{10}) \vert (c_{1}^{2}+c_{2}^{2})(c_{3}^{2}+c_{4}^{2}) \neq 0 \; \mbox{and} \; \bar{\theta}_{1}-\bar{\theta}_{2} \notin  \pi  \mathbb{Z}  \right\},
	\end{equation}
	where $\bar{\theta}_{1}$ and $\bar{\theta}_{2}$ are defined by
	\begin{equation}\label{def:thetabar1 thetabar2}
		\bar{\theta}_{1}:=\theta_{1}/\beta_{2},\quad  \bar{\theta}_{2}:=\theta_{2}/\beta_{1}
	\end{equation}
	via the phase parameters $\theta_{1}, \theta_{2} \in [2k\pi, (2k+1)\pi]$ determined by
	\begin{equation}\label{def:theta1 and theta2}
		\cos\theta_{1} = \frac{c_{1}}{\sqrt{c_{1}^{2} + c_{2}^{2}}}, \quad 
		\cos\theta_{2} = \frac{c_{3}}{\sqrt{c_{3}^{2} + c_{4}^{2}}}
	\end{equation}
	for some $k \in \mathbb{Z}$, 
	and 
	\begin{equation}\label{eq:values of ci}   c_{i}=e^{(i+1)\pi}\tilde{c}_{i}/c_{0}, \qquad  i=1,...,4.
	\end{equation} 
	The set  $\mathcal{A}$  helps to  characterize  the asymmetric periodic traveling solutions below when \eqref{eq: the case for characterizing asymmetric solutions} is satisfied. 
	To proceed, we introduce a criterion for the asymmetry of the sum of two trigonometric functions with  different frequency, phase and amplitude. 
	\begin{lemma}[\cite{MR4840497}, Proposition 3.7]\label{lem:asymmetric}
		Let the integers $1 \leq k_{1} < k_{2}$ be coprime and let $r_{1},r_{2},\theta_{1},\theta_{2} \in \mathbb{R}$. Then the function
		\begin{equation*}
			u(x)=r_{1} \cos(k_{1}(x+ \theta_{1}))+r_{2} \cos (k_{2}(x+ \theta_{2}))
		\end{equation*}
		is asymmetric if and only if $r_{1}r_{2} \neq 0$ and $\theta_{1}-\theta_{2} \notin \frac{\pi}{k_{1}k_{2}} \mathbb{Z}$.
	\end{lemma}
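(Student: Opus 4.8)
The plan is to reduce the symmetry question to the vanishing of an odd part about a candidate axis, and then to a purely arithmetic condition. First I would fix a potential symmetry axis $\lambda \in \mathbb{R}$; by Definition \ref{def:symmetric_function}, $u$ is symmetric about $\lambda$ exactly when $u(\lambda+y) = u(\lambda-y)$ for all $y$, that is, when the odd part $O_\lambda(y) := u(\lambda+y) - u(\lambda-y)$ vanishes identically. Applying $\cos(A+B) - \cos(A-B) = -2\sin A \sin B$ to each cosine (with $A = k_j(\lambda+\theta_j)$, $B = k_j y$) would yield
\[
O_\lambda(y) = -2 r_1 \sin\!\big(k_1(\lambda+\theta_1)\big)\sin(k_1 y) - 2 r_2 \sin\!\big(k_2(\lambda+\theta_2)\big)\sin(k_2 y).
\]

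Since $k_1 \neq k_2$ are distinct positive integers, $\sin(k_1 y)$ and $\sin(k_2 y)$ are linearly independent, so $O_\lambda \equiv 0$ if and only if both coefficients vanish. I would then split into cases. If $r_1 r_2 = 0$ the function collapses to a single cosine, which is symmetric about each of its crests, so $u$ is never asymmetric; this forces the condition $r_1 r_2 \neq 0$. Under $r_1 r_2 \neq 0$ the two coefficient conditions become $\sin(k_1(\lambda+\theta_1)) = 0$ and $\sin(k_2(\lambda+\theta_2)) = 0$, i.e. $\lambda + \theta_1 = p\pi/k_1$ and $\lambda + \theta_2 = q\pi/k_2$ for some $p, q \in \mathbb{Z}$. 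Eliminating $\lambda$ by subtraction shows that an axis exists if and only if $\theta_1 - \theta_2 = (pk_2 - qk_1)\pi/(k_1 k_2)$ for some integers $p,q$.

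The one genuinely arithmetic step is to determine the attainable set $\{pk_2 - qk_1 : p,q \in \mathbb{Z}\}$, and this is exactly where the coprimality hypothesis enters: by B\'ezout's identity, $\gcd(k_1,k_2)=1$ makes this set all of $\mathbb{Z}$, so a symmetry axis exists precisely when $\theta_1 - \theta_2 \in \frac{\pi}{k_1 k_2}\mathbb{Z}$. For the converse direction I would, given such a value $\theta_1-\theta_2 = n\pi/(k_1k_2)$, choose $p,q$ with $pk_2 - qk_1 = n$, set $\lambda := p\pi/k_1 - \theta_1$, and check consistency of the two conditions on $\lambda$ (the identity $pk_2 - n = qk_1$ makes $\lambda+\theta_2 = q\pi/k_2$ automatically). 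Negating the resulting characterization gives the lemma. I do not expect a serious obstacle here: the trigonometric reduction is mechanical and the only point requiring care is the B\'ezout surjectivity argument together with the verification that the two linear conditions on $\lambda$ can be satisfied simultaneously rather than being over-determined.
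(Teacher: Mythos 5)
Your argument is correct, but note that the paper itself does not prove this lemma: it is quoted verbatim from M\ae hlen and Svensson Seth (Proposition 3.7 of the cited reference), so there is no in-paper proof to compare against. Your reduction is sound and self-contained: the identity $\cos(A+B)-\cos(A-B)=-2\sin A\sin B$ gives the odd part
\[
O_\lambda(y)=-2r_1\sin\bigl(k_1(\lambda+\theta_1)\bigr)\sin(k_1y)-2r_2\sin\bigl(k_2(\lambda+\theta_2)\bigr)\sin(k_2y),
\]
and since $\sin(k_1y)$, $\sin(k_2y)$ are linearly independent for distinct positive integers $k_1\neq k_2$, symmetry about $\lambda$ is equivalent to the vanishing of both coefficients. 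The case split on $r_1r_2=0$ and the elimination of $\lambda$ leading to $\theta_1-\theta_2=(pk_2-qk_1)\pi/(k_1k_2)$ are both handled correctly, and you rightly identify B\'ezout's identity (via $\gcd(k_1,k_2)=1$, so $\{pk_2-qk_1\}=\mathbb{Z}$) as the one place where coprimality is used; your consistency check for the converse direction closes the loop. This matches the strategy of the original proof in the cited source, which likewise characterizes the symmetry axes of each summand and intersects the two resulting arithmetic progressions.
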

	We are ready to state the main result for the perturbed R-KdV-RLW equation.
	\begin{theorem}\label{thm: no symmetric solitary solution}
		Suppose $b_{1}^{2}+b_{2}^{2}+b_{10}^{2}\neq 0$. Assume the perturbed R-KdV-RLW equation \eqref{eq:gkdv} admits at most one classical solution $u:I \times \mathbb{R} \to \mathbb{R}$ for given initial data
		$v_0=v(0,x)$. Then, a non-trivial symmetric traveling solution must be a linear, finite combination of functions of trigonometric type. As a consequence, there exists no non-trivial symmetric traveling solution of solitary type.  Moreover,   a classical, symmetric steady solution exists only when the coefficients of \eqref{eq:gkdv} satisfy one of the following:
		\begin{enumerate}
			\item [\emph{i)}] $b_{1}=0,b_{2}b_{10}>0$,
			\item [\emph{ii)}] $b_{2}^{2}> 4b_{1}b_{10}$, $b_{1}<0$ and $b_{10}>0$,
			\item  [\emph{iii)}] $b_{2}^{2}> 4b_{1}b_{10}$ and $(\sqrt{b_{2}^2-4b_{1}b_{10}}-b_{2})b_{10}<0$,  
			\item  [\emph{iv)}] $b_{2}^{2}= 4b_{1}b_{10}$ and $b_{2}b_{10}>0$,
			\item [\emph{v)}] $b_{10}= 0, b_{2} \neq 0$ and $b_{1}b_{2} > 0$.
			
		\end{enumerate}
		
	\end{theorem}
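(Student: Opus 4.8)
The plan is to follow the parity-splitting strategy of Theorem~\ref{theorem:RKR}, but to exploit that a \emph{traveling} profile forces its axis of symmetry to move rigidly with the wave. First I would record the reflection rules \eqref{R_K_RLW:partial_t}--\eqref{R_K_RLW:partial_x} and observe that, under $x\mapsto 2\lambda(t)-x$, the combination $v_t+\dot\lambda v_x$ is even while $v_x,v_{xxx},v_{xxxxx}$ are odd and $v,v_{xx},v_{xxxx}$ are even. Evaluating \eqref{eq:gkdv} at $x$ and at $2\lambda(t)-x$ and forming the half-sum and half-difference separates the equation into an even and an odd constraint. A parity inspection of every monomial shows that the only even terms are $b_1v$, $b_2v_{xx}$, $b_{10}v_{xxxx}$, whereas the $a_1,a_2,a_5$ terms and every nonlinear term of $R$ are odd. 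Writing $P:=1+a_3\partial_x^2+a_4\partial_x^4$, the even constraint collapses to
\begin{equation}\label{plan:even}
	P\big(v_t+\dot\lambda v_x\big)=b_1v+b_2v_{xx}+b_{10}v_{xxxx},
\end{equation}
while the half-difference yields the nonlinear odd constraint
\begin{equation}\label{plan:odd}
	(a_1-\dot\lambda)v_x+(a_2-a_3\dot\lambda)v_{xxx}-a_4\dot\lambda v_{xxxxx}+a_5(v^n)_x=R_{\mathrm{odd}},
\end{equation}
where $R_{\mathrm{odd}}$ collects the odd part of $R$. Equation \eqref{plan:even} is the announced linear equation carrying the time-dependent coefficient $\dot\lambda(t)$.

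The crucial step, which I expect to be the main obstacle, is the reduction of \eqref{plan:even} to constant coefficients via the fact that the symmetry axis travels at the wave speed. Since $v$ is traveling, $v(t,x)=V(x-ct)$; combining this with $v(t,x)=v(t,2\lambda(t)-x)$ gives $V(X)=V\big(2(\lambda(t)-ct)-X\big)$ for $X=x-ct$, so $V$ is symmetric about $\lambda(t)-ct$ for every $t$. As $\lambda\in C^1$ and the set of symmetry axes of a (possibly periodic) profile is discrete, continuity forces $\lambda(t)-ct$ to be constant, i.e. $\dot\lambda\equiv c$. Then $v_t+\dot\lambda v_x=v_t+cv_x=0$, the left-hand side of \eqref{plan:even} vanishes, and after normalizing the axis to $X=0$ so that $V$ is even, \eqref{plan:even} becomes the constant-coefficient ordinary differential equation
\begin{equation}\label{plan:ode}
	b_{10}V^{(4)}+b_2V''+b_1V=0 .
\end{equation}
Everything downstream is then linear algebra on \eqref{plan:ode}.

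From \eqref{plan:ode} the structural claims are immediate. Every solution is a finite linear combination of exponentials $e^{rX}$ with $r$ a root of $b_{10}r^4+b_2r^2+b_1=0$ (or of $b_2r^2+b_1=0$ when $b_{10}=0$), hence of functions of trigonometric and exponential type; in particular a nontrivial such combination cannot tend to $0$ as $X\to+\infty$ and as $X\to-\infty$ simultaneously, since that would force all exponential modes to vanish. Because $b_1^2+b_2^2+b_{10}^2\neq0$ makes \eqref{plan:ode} a genuine constraint, this excludes any symmetric traveling solution of solitary type. For the admissibility list i)--v) I would substitute $s=r^2$ and analyze the quadratic $b_{10}s^2+b_2s+b_1=0$: a nontrivial \emph{bounded} (hence trigonometric) even solution exists precisely when this quadratic has a nonpositive real root, producing a purely imaginary $r$. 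Splitting on the sign of $b_{10}$, on the discriminant $b_2^2-4b_1b_{10}$, and on the signs of the roots $s_\pm$ through their product $b_1/b_{10}$ and sum $-b_2/b_{10}$ yields exactly the five regimes, with i) the degenerate case $b_1=0$, ii)--iii) the two distinct-real-root configurations (the condition $(\sqrt{b_2^2-4b_1b_{10}}-b_2)b_{10}<0$ in iii) being equivalent to $s_+<0$), iv) the double-root case $b_2^2=4b_1b_{10}$, and v) the second-order case $b_{10}=0$; here the frequencies are $1/\beta_1,1/\beta_2$ with $\beta_1,\beta_2$ as in \eqref{def:alpha and beta}.

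Finally I would close the loop using \eqref{plan:odd} together with the assumed uniqueness, exactly as in Theorems~\ref{theorem:RKR} and \ref{the weak theorem:R-K-RLW}. Once \eqref{plan:ode} fixes $V$ as a trigonometric combination, substituting it into the nonlinear odd constraint \eqref{plan:odd} produces the compatibility relations among the coefficients, and the mode amplitudes are recovered from the initial datum $v_0$ at the sample points entering $\tilde c_1,\dots,\tilde c_4$ and $c_0$, giving the constants $c_i$ of \eqref{eq:values of ci}. In the genuinely two-frequency regime iii) one must still verify that the combination $\cos(X/\beta_1)+\cos(X/\beta_2)$ is actually symmetric; this is where Lemma~\ref{lem:asymmetric} enters, the incommensurate modes producing a symmetric profile only when $\bar\theta_1-\bar\theta_2\in\pi\mathbb{Z}$, so that membership of $(v_0,b_1,b_2,b_{10})$ in the set $\mathcal{A}$ of \eqref{def:set A} detects the asymmetric, and hence excluded, solutions. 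The amplitude bookkeeping is tedious but routine; the substantive content is the reduction to \eqref{plan:ode}.
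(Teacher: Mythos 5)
Your proposal follows essentially the same route as the paper: the parity split of \eqref{eq:gkdv} under reflection into the even constraint and the nonlinear odd constraint, the key observation that the symmetry axis travels with the wave so that $v_t+\dot\lambda v_x=0$ collapses the even constraint to the constant-coefficient equation $b_{10}V^{(4)}+b_2V''+b_1V=0$, and then the case analysis of the characteristic quadratic in $r^2$ yielding the regimes i)--v) and the exclusion of solitary profiles. Your explicit justification that $\lambda(t)-ct$ is constant (via discreteness of the symmetry axes) is a small refinement of a step the paper merely asserts, but the argument is otherwise the same.
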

	
	\begin{proof}
		Assume that \eqref{eq:gkdv} has a symmetric  steady solution $v(t,x)$. We get from the symmetry of $v(t,x)$ and direct calculation that 
		\begin{equation}\label{eq:gkdv_symmetry equation}
			\begin{split}
				&\left[v_t+a_{3} v_{xxt}+a_{4} v_{xxxxt}+2\dot{\lambda}(v_x+a_{3} v_{xxx}+a_{4} v_{xxxxx})\right]\Big|_{(t,2\lambda(t)-x)}\\&=\left[a_{1} v_{x}-b_{3} v_x v_{xx}-b_{4} v^m v_x+a_{5}(v^n)_x+a_{2} v_{xxx}-b_{5} v v_{xxx}\right]\Big|_{(t,2\lambda(t)-x)} \\
				&-\left[b_{6} vv_x v_{xx}+b_{7} v_x^3+b_{8} v_x v_{xxxx}+b_{9} v_{xx} v_{xxx}+b_{11} v_{xxxxx}+b_{12} vv_{xxxxx}\right]\Big|_{(t,2\lambda(t)-x)}\\
				&+\left[b_{1} v+b_{2} v_{xx}+b_{10} v_{xxxx}\right]\Big|_{(t,2\lambda(t)-x)}.
			\end{split}
		\end{equation}
		In view of the arbitrariness of $(t,2\lambda(t)-x) \in I \times \mathbb{R}$, we compare \eqref{eq:gkdv_symmetry equation} with \eqref{eq:gkdv} to get  
		\begin{equation}\label{eq:gkdv_symmetry equation_1}
			v_t+a_{3} v_{xxt}+a_{4} v_{xxxxt}+\dot{\lambda}(v_x+a_{3} v_{xxx}+a_{4} v_{xxxxx})=b_{1} v+b_{2} v_{xx}+b_{10} v_{xxxx}
		\end{equation}
		and
		\begin{equation}\label{eq:gkdv_symmetry equation_2}
			\begin{split}
				&\dot{\lambda}(v_x+a_{3} v_{xxx}+a_{4} v_{xxxxx})\\
				&=a_{1} v_x+a_{2} v_{xxx}+a_{5}(v^n)_x -b_{3} v_x v_{xx}-b_{4} v^m v_x-b_{5} v v_{xxx}\\ 
				&-b_{6} vv_x v_{xx}-b_{7} v_x^3-b_{8} v_x v_{xxxx}-b_{9} v_{xx} v_{xxx}-b_{11} v_{xxxxx}-b_{12} vv_{xxxxx}
			\end{split}
		\end{equation}
		for any $(t,x)\in I \times \mathbb{R}$.
		We introduce the traveling ansatz  $v(t,x)=g(x-\tilde{c}t)$ for some function $g$ and $\tilde{c}\in \mathbb{R}$. A key observation is that the symmetric axis $x=\lambda(t)$ travels at exactly the same speed as the solution. Therefore, we have $\dot{\lambda}(t)=\tilde{c}$ and	$v_{t}=-\dot{\lambda}v_{x}.
		$
		It is then straightforward that 
		\begin{equation}\label{eq:P2 apply to linear equation}
			v_t+a_{3} v_{xxt}+a_{4}v_{xxxxt}=-\dot{\lambda}(v_x+a_{3} v_{xxx}+a_{4}v_{xxxxx}).
		\end{equation}
		The comparison of \eqref{eq:P2 apply to linear equation} with \eqref{eq:gkdv_symmetry equation_1}
		indicates that the wave profile $v(t,x)$ satisfies
		\begin{equation}\label{eq:gkdv_second_part}
			b_{1} v+b_{2} v_{xx}+b_{10} v_{xxxx}=0.
		\end{equation}
		In the following, we classify all possible symmetric solutions of	\eqref{eq:gkdv_second_part}. For physical relevance, we exclude solutions that become unbounded as  $|y|\to \infty$.   To start with, we introduce variable substitution  $y:=x-\tilde{c}t$ and  rewrite  \eqref{eq:gkdv_second_part} as
		\begin{equation}\label{eq:gkdv_second_part with variable y}
			b_{1} g+b_{2} g_{yy}+b_{10} g_{yyyy}=0.
		\end{equation}
		It then suffices to classify all nontrivial bounded, symmetric solution $g(y)$ of \eqref{eq:gkdv_second_part with variable y}.
		Note that  \eqref{eq:gkdv_second_part with variable y} is a  linear ordinary differential equation with constant coefficients over $y$.
		The corresponding characteristic equation has the form
		\begin{equation}\label{characteristic eq}
			b_{10} \tau^4+b_{2} \tau^2 +b_{1}=0.
		\end{equation}
		With $z:=\tau^2$, we rewrite the characteristic equation \eqref{characteristic eq} as
		\begin{equation}\label{conv_char_eq}
			b_{10} z^2+b_{2} z +b_{1}=0,
		\end{equation}
		where the roots are given by
		\begin{itemize}
			\item Quadratic case ($b_{10} \neq 0$): $z^{\pm} = \frac{-b_{2} \pm \sqrt{b_{2}^{2} - 4b_{1}b_{10}}\,}{2b_{10}}$.
			\item Degenerate case ($b_{10} = 0,\ b_{2} \neq 0$): $ z = -\frac{b_{1}}{b_{2}}$.
		\end{itemize}
		We will classify all nontrivial bounded, symmetric solution of \eqref{eq:gkdv_second_part with variable y} for these two cases, respectively.
		
		We start with the  quadratic case. In this case, the roots $z^{\pm}$ depend on the discriminant $D:=b_{2}^2-4b_{1} b_{10}$.
		Since the coefficients in the equation are all real,  $z^{\pm}$ will be either both real roots  or a pair of complex conjugate roots.
		\begin{enumerate}
			\item [] \textbf{Case I}:  $D>0$, i.e., $b_{2}^{2}> 4b_{1}b_{10}$. In this case, we have two  distinct real roots $z^{\pm}$.
			\begin{enumerate}
				\item If one of $z^{\pm}$ is zero, i.e., $b_{1}=0,b_{2}b_{10} \neq 0$, then the two roots are $0$ and $-\frac{b_{2}}{b_{10}}$. {\color{black}Without loss of generality, we assume $z^{-}=0$, } $z^{+}=\frac{-b_{2}}{b_{10}}$, then \eqref{eq:gkdv_second_part with variable y} has the following general solution
				\begin{align*}
					g(y)= \left\{ \begin{array}{ccc}
						c_{1}+c_{2}y+c_{3}e^{\sqrt{z^{+}}y}+c_{4}e^{-\sqrt{z^{+}}y},& \mbox{if} & b_{2}b_{10}<0, \\
						c_{1}+c_{2}y+c_{3} \cos( \sqrt{-z^{+}}y )+c_{4} \sin (\sqrt{-z^{+}}y) ,& \mbox{if} & b_{2}b_{10}>0.
					\end{array} \right.
				\end{align*}
				Only when $c_{2}=0$, $b_{2}b_{10}>0$, and $c_{3}^{2}+c_{4}^{2} \neq 0$, we can get a nontrivial, bounded symmetric  solution. Moreover, we can use the initial condition $v_{0}(x)=v(0,x)$ to get the exact, explicit form of the solution as follows
				\begin{equation*}
					g(y)=c_{1}+c_{3} \cos( \sqrt{-z^{+}}y )+c_{4} \sin (\sqrt{-z^{+}}y),
				\end{equation*}
				where $c_{1}=[v_{0}(0)+v_{0}(\sqrt{\frac{b_{10}}{b_{2}}}\pi )]/2$, $c_{3}=[v_{0}(0)-v_{0}(\sqrt{\frac{b_{10}}{b_{2}}}\pi )]/2$ and $c_{4}=v_{0}(\sqrt{\frac{b_{10}}{b_{2}}} \frac{\pi}{2} )-[v_{0}(0)+v_{0}(\sqrt{\frac{b_{10}}{b_{2}}}\pi )]/2$.
				\item  If $z^{\pm} > 0$, i.e., $(b_{2}+\sqrt{b_{2}^2-4b_{1}b_{10}})b_{10}<0$, then the characteristic equation \eqref{characteristic eq} has four roots 
				\begin{equation*}
					\tau_{1}^{\pm}=\pm\sqrt{z^{+}} , \qquad \tau_{2}^{\pm}= \pm \sqrt{z^{-}}.
				\end{equation*}  
				Accordingly,   \eqref{eq:gkdv_second_part with variable y} has the following general solution
				\begin{equation*}
					g(y)=c_1e^{\tau_{1}^{+} y}+c_2 e^{\tau_{1}^{-} y}+c_3 e^{\tau_{2}^{+} y}+c_4 e^{\tau_{2}^{-} y},
				\end{equation*}
				where $c_{i} \in \mathbb{R}, \; i=1,\dots,4$, are given by \eqref{eq:values of ci}. This general solution will either blow up at infinity or be trivial. In this case, no physically relevant non-trivial solution is allowed.
				\item 	If $z^{+} z^{-} <0$, i.e., $b_{1}b_{10}<0$, then the characteristic equation \eqref{characteristic eq} has four roots 
				\begin{equation*}
					\tau_{1}^{\pm}=\pm\sqrt{z^{+}} , \qquad \tau_{2}^{\pm}= \pm i\sqrt{-z^{-}}.
				\end{equation*}  
				Accordingly,   \eqref{eq:gkdv_second_part with variable y} has the following general solution
				\begin{equation*}
					g(y)=c_{1}e^{\tau_{1}^{+} y}+c_2 e^{\tau_{1}^{-} y}+c_3 \cos(\sqrt{-z^{-}}  y) + c_4 \sin(\sqrt{-z^{-}}y).
				\end{equation*}
				This solution will be unbounded and blow up at infinity if $c_{1}^{2}+ c_{2}^{2} \neq 0$. 
				Therefore, a physically relevant, nontrivial solution corresponds to $c_{1}=c_{2}=0$ but $c_{3}^{2}+c_{4}^{2}\neq 0$, and we get a symmetric, periodic solution.
				Moreover, we can use the initial condition $v_{0}(x)=v(0,x)$ to get the exact, explicit form of the solution as follows
				\begin{equation}\label{eq:symmetric solution 1}
					g(y)=c_3 \cos(\sqrt{-z^{-}}  y) + c_4 \sin(\sqrt{-z^{-}}y),
				\end{equation}
				where $c_{3}=v_{0}(0)$ and $c_{4}=v_{0}\left( \frac{\pi }{2} \beta_{2} \right)$.
				\item If $z^{\pm}<0$, i.e., $(\sqrt{b_{2}^2-4b_{1}b_{10}}-b_{2})b_{10}<0$, without loss of generality, we assume $\sqrt{-z^{+}} < \sqrt{-z^{-}}$. Then the characteristic equation \eqref{characteristic eq} has four purely imaginary roots 
				\begin{equation*}
					\tau_{1}^{\pm}=\pm i \sqrt{-z^{+}}=\pm\frac{1}{\beta_{1}}i , \qquad \tau_{2}^{\pm}= \pm i\sqrt{-z^{-}}=\pm \frac{1}{\beta_{2}} i,
				\end{equation*}  
				where $\beta_{1},\beta_{2}$ are given by \eqref{def:alpha and beta}. Hence, the general solution of  \eqref{eq:gkdv_second_part with variable y} can be expressed as
				\begin{equation}\label{eq:sum of trigonometric functions of different frequency}
					\qquad \qquad g(y)=c_{1} \cos (y/\beta_{1})+c_{2} \sin (y/ \beta_{1}) +c_{3} \cos(y/ \beta_{2})+c_{4} \sin (y/ \beta_{2}).
				\end{equation}
				Moreover, a precise  expression of symmetric steady solutions can be  given for different parameters as follows. To simplify the exposition, we define
				\begin{equation}\label{def: r1 and r2}
					r_{1}:=\sqrt{c_{1}^{2}+c_{2}^{2}}, \quad  r_{2}:=\sqrt{c_{3}^{2}+c_{4}^{2}}.
					\nonumber
				\end{equation}
				\begin{enumerate}
					\item If $r_{1}=0,r_{2} \neq 0$, then \eqref{eq:sum of trigonometric functions of different frequency} gives a symmetric periodic solution to \eqref{eq:gkdv_second_part with variable y}. Applying the initial condition $v_{0}(x)=v(0,x)$, we derive the exact, explicit form of the solution as follows
					\begin{equation*}
						g(y)=c_{3} \cos(y/\beta_{2})+c_{4} \sin (y/\beta_{2}),
					\end{equation*}
					where $c_{3}=v_{0}(0),c_{4}=v_{0}(\frac{\pi}{2}\beta_{2})$.
					\item Similarly, if $r_{1}\neq 0,r_{2} = 0$, then \eqref{eq:sum of trigonometric functions of different frequency} also admits a symmetric periodic solution  to \eqref{eq:gkdv_second_part with variable y}. Applying the initial condition $v_{0}(x)=v(0,x)$, we derive the exact, explicit form of the solution as follows
					\begin{equation*}
						g(y)=c_{1} \cos (y/\beta_{1})+c_{2} \sin ( y/ \beta_{1}) ,
					\end{equation*}
					where $c_{1}=v_{0}(0),c_{2}=v_{0}(\frac{\pi}{2} \beta_{1})$.
					\item If $r_{1}r_{2} \neq 0$, then \eqref{eq:sum of trigonometric functions of different frequency} may not be symmetric  in general. In this case, it is non-trivial to give a general dependence of the symmetric structure on the coefficients $c_{1},c_{2},c_{3},c_{4},\beta_{1}$ and $\beta_{2}$.   
					However, if $1/ \beta_{1}$ and $1/\beta_{2}$ are both positive integers and coprime, then 
					Lemma \ref{lem:asymmetric} states the  solution to be asymmetric if and only if 
					$(v_{0},b_{1},b_{2},b_{10})\notin \mathcal{A}$ as defined in \eqref{def:set A}.
					In this case, the symmetric solution to \eqref{eq:gkdv_second_part with variable y} can be rewritten as
					\begin{equation}\label{eq:gkdv_non-similar frequency solution}
						g(y)=r_{1} \cos (y/ \beta_{1}-\theta_{1})+r_{2} \cos(y/ \beta_{2}-\theta_{2}),
					\end{equation} 
					where $\theta_{1},\theta_{2}$ are given by \eqref{def:theta1 and theta2}.
					We go further in the discussion to give the explicit form of the solution  by  determining the parameters in \eqref{eq:gkdv_non-similar frequency solution} via coefficients of the perturbed R-KdV-RLW equation \eqref{eq:gkdv} and initial data $v_{0}(x)=v(0,x)$. In particular, we will calculate, via the quantities in \eqref{def:alpha and beta}-\eqref{eq:values of ci},  the parameters $c_{i}$, $i=1,...,4$ and $\beta_{j},\theta_{j}$, $j=1,2$.  For conciseness, we introduce the function 
					\[
					G(x) := (\mathrm{sgn} (x)-1)^{\mathrm{sgn}(x)+1},\quad x\in \mathbb{R},
					\]
					where $\mathrm{sgn}(x)$ denotes the usual sign function.
					\begin{enumerate}
						\item If $c_{i}\neq0$, $ i=1,\ldots,4$ and $\bar{\theta}_{1}-\bar{\theta}_{2} \in \pi \mathbb{Z}$, {\color{black}for $\bar{\theta}_{1},\bar{\theta}_{2}$ defined in \eqref{def:thetabar1 thetabar2},} then   \eqref{eq:gkdv_non-similar frequency solution} is a symmetric solution with  $c_{i}=e^{(i+1)\pi}\tilde{c_{i}}/c_{0}$,
						$i=1\ldots,4$. These $c_{i}$  and $\tilde{c}_{i}$, $i=1\ldots,4$ are given in \eqref{eq:values of ci}.  
						\item If $c_{1}=0,c_{2}c_{3}c_{4} \neq 0$ and $(\frac{\pi}{2}+2k\pi)/\beta_{2}-\bar{\theta}_{2} \in \pi \mathbb{Z},\; \forall k \in \mathbb{Z}$, then \eqref{eq:gkdv_non-similar frequency solution} can be expressed as
						\begin{equation*}
							g(y)=r_{2} \cos(y/\beta_{2}-\theta_{2})+c_{2} \sin (y/\beta_{1}),
						\end{equation*}
						where
						$c_{4}= \frac{\sin ( \frac{\pi}{2} \alpha_{2} ) \left[ v_{0} ( \frac{\pi}{2} \beta_{1} )-v_{0}(0) \cos  ( \frac{\pi}{2} \alpha_{1} ) \right] -v_{0} ( \frac{\pi}{2} \beta_{2} )  }{\sin ( \frac{\pi}{2} \alpha_{1} ) \sin ( \frac{\pi}{2} \alpha_{2} )-1}$,  $c_{3}=v_{0}(0)$ and 
						$ c_{2}=\frac{v_{0}( \frac{\pi}{2} \beta_{2} ) \sin ( \frac{\pi}{2} \alpha_{1} )+ v_{0}(0) \cos ( \frac{\pi}{2} \alpha_{1} )}{\sin ( \frac{\pi}{2} \alpha_{1} ) \sin ( \frac{\pi}{2} \alpha_{2} )-1} $.
						\item If $c_{2}=0,c_{1}c_{3}c_{4} \neq 0$ and $(2k \pi+G(c_{1})\pi)/\beta_{2} -\bar{\theta}_{2} \in \pi\mathbb{Z},\; \forall k \in \mathbb{Z}$, the solution \eqref{eq:gkdv_non-similar frequency solution} can be expressed as
						\begin{equation*}               g(y)=r_{2}\cos(y/\beta_{2}-\theta_{2})+c_{1} \cos (y/\beta_{1}),
						\end{equation*}
						where
						$c_{3}= \frac{ \left[ v_{0}(0) \cos ( \frac{\pi}{2} \alpha_{2} )-v_{0}( \frac{\pi}{2} \beta_{1} )  \right] \sin ( \frac{\pi}{2} \alpha_{1} ) + v_{0} ( \frac{\pi}{2} \beta_{1} )}{\sin ( \frac{\pi}{2} \alpha_{1} )  \cos ( \frac{\pi}{2} \alpha_{2} )}$, $c_{4}=\frac{v_{0}( \frac{\pi}{2} \beta_{1} )}{ \sin ( \frac{\pi}{2} \alpha_{1} )} $ and $c_{1}= \frac{ v_{0}( \frac{\pi}{2} \beta_{2} ) \sin ( \frac{\pi}{2} \alpha_{1} ) -v_{0} ( \frac{\pi}{2} \beta_{1} )     }{ \sin ( \frac{\pi}{2} \alpha_{1} )  \cos ( \frac{\pi}{2} \alpha_{2} ) }$.
						
						\item If $c_{3}=0,c_{1}c_{2}c_{4} \neq 0$ and $\bar{\theta}_{1}-(\frac{\pi}{2}+2k\pi)/\beta_{1} \in \pi\mathbb{Z},\; \forall k \in \mathbb{Z}$, the solution \eqref{eq:gkdv_non-similar frequency solution} can be expressed as
						\begin{equation*}
							g(y)=r_{1} \cos (y/\beta_{1}-\theta_{1})+c_{4} \sin (y/\beta_{2}),
						\end{equation*}
						where
						$c_{4}=\frac{v_{0}( \frac{\pi}{2} \beta_{1} ) \sin ( \frac{\pi}{2} \alpha_{2} )+ v_{0}(0) \cos ( \frac{\pi}{2} \alpha_{2} )}{\sin ( \frac{\pi}{2} \alpha_{2} ) \sin ( \frac{\pi}{2} \alpha_{1} )-1}$, $c_{1}=v_{0}(0)$ and $c_{2}= \frac{\sin ( \frac{\pi}{2} \alpha_{1} ) \left[ v_{0} ( \frac{\pi}{2} \beta_{2} )-v_{0}(0) \cos  ( \frac{\pi}{2} \alpha_{2} ) \right] -v_{0} ( \frac{\pi}{2} \beta_{1} )  }{\sin ( \frac{\pi}{2} \alpha_{2} ) \sin ( \frac{\pi}{2} \alpha_{1} )-1}$.
						\item If $c_{4}=0,c_{1}c_{2}c_{3} \neq 0$ and $\bar{\theta}_{1}-(2k\pi+G(c_{3})\pi)/\beta_{1} \in   \pi\mathbb{Z},\; \forall k \in \mathbb{Z}$, the solution \eqref{eq:gkdv_non-similar frequency solution} can be expressed as
						\begin{equation*}
							g(y)=r_{1} \cos (y/\beta_{1}-\theta_{1})+c_{3} \cos (y/\beta_{2}),
						\end{equation*}
						where  
						$c_{1}= \frac{ \left[ v_{0}(0) \cos ( \frac{\pi}{2} \alpha_{1} )-v_{0}( \frac{\pi}{2} \beta_{2} )  \right] \sin ( \frac{\pi}{2} \alpha_{2} ) + v_{0} ( \frac{\pi}{2} \beta_{2} )}{\sin ( \frac{\pi}{2} \alpha_{2} )  \cos ( \frac{\pi}{2} \alpha_{1} )}$, $c_{2}=\frac{v_{0}( \frac{\pi}{2} \beta_{2} )}{ \sin ( \frac{\pi}{2} \alpha_{2} )}$ and 
						$c_{3}= \frac{ v_{0}( \frac{\pi}{2} \beta_{1} ) \sin ( \frac{\pi}{2} \alpha_{2} ) -v_{0} ( \frac{\pi}{2} \beta_{2} )     }{ \sin ( \frac{\pi}{2} \alpha_{2} )  \cos ( \frac{\pi}{2} \alpha_{1} ) } $.
						\item If $c_{1}c_{3} \neq 0,c_{2}=c_{4}=0$ and $(2k_{1}+G(c_{1}))/\beta_{2}-(2k_{2}+G(c_{3}))/\beta_{1} \in \mathbb{Z} ,\; \forall k_{1},k_{2} \in \mathbb{Z}$, the solution \eqref{eq:gkdv_non-similar frequency solution} can be expressed as
						\begin{equation*}
							g(y)=c_{1}\cos(y/\beta_{1})+c_{3}\cos(y/\beta_{2}),
						\end{equation*}
						where $c_{1}=\frac{v_{0}(\frac{\pi}{2} \beta_{2})}{\cos (\frac{\pi}{2} \alpha_{2})}$, $c_{3}= \frac{v_{0}(\frac{\pi}{2} \beta_{1})}{\cos (\frac{\pi}{2} \alpha_{1})} $.
						\item If $c_{1}c_{4} \neq 0 ,c_{2}=c_{3}=0$, and $(2k_{1}+G(c_{1}))/\beta_{2}-(\frac{1}{2}+2k_{2})/\beta_{1} \in \mathbb{Z} ,\; \forall k_{1},k_{2} \in \mathbb{Z}$, the solution \eqref{eq:gkdv_non-similar frequency solution} can be expressed as
						\begin{equation*}
							g(y)=c_{1} \cos(y/\beta_{1})+c_{4}\sin(y/\beta_{2}), 
						\end{equation*}
						where $c_{1}=v_{0}(0),c_{4}=\frac{v_{0}(\frac{\pi}{2}\beta_{1})}{\sin (\frac{\pi}{2}\alpha_{1})} $.
						\item If $c_{2}c_{3} \neq 0,c_{1}=c_{4}=0$, and $(\frac{1}{2}+2k_{1})/\beta_{2}-(2k+G(c_{3}))/\beta_{1} \in \mathbb{Z} ,\; \forall k_{1},k_{2} \in \mathbb{Z}$,
						the solution \eqref{eq:gkdv_non-similar frequency solution} can be expressed as
						\begin{equation*}
							g(y)=c_{2} \sin(y/\beta_{1})+c_{3}\cos(y/\beta_{2}),
						\end{equation*}
						where $c_{2}=\frac{v_{0}(\frac{\pi}{2}\beta_{2})}{\sin (\frac{\pi }{2}\alpha_{2})}$, $c_{3}=v_{0}(0)$.
						\item If  $c_{2}c_{4}\neq 0, c_{1}=c_{3} =0$, and $k_{1}/\beta_{2}-k_{2}/\beta_{1} \in \mathbb{Z} ,\; \forall k_{1},k_{2} \in \mathbb{Z}$,
						the solution \eqref{eq:gkdv_non-similar frequency solution} can be expressed as
						\begin{equation*}
							g(y)=c_{2} \sin(y/\beta_{1})+c_{4}\sin(y/\beta_{2}),
						\end{equation*}
						where  
						$c_{2}=\frac{v_{0}(\frac{\pi}{2}\beta_{2}) \sin ( \frac{\pi }{2}\alpha_{1} )-v_{0}( \frac{\pi}{2} \beta_{1})}{\sin ( \frac{\pi}{2} \alpha_{2})\sin ( \frac{\pi }{2} \alpha_{1} )-1}$ and $c_{4}=\frac{v_{0}(\frac{\pi}{2} \beta_{1} ) \sin ( \frac{\pi }{2} \alpha_{2} )-v_{0}( \frac{\pi}{2} \beta_{2} )}{\sin ( \frac{\pi}{2} \alpha_{2} )\sin ( \frac{\pi}{2} \alpha_{1} )-1}$.
					\end{enumerate}
				\end{enumerate}
			\end{enumerate}
			\item[] \textbf{Case II}: $D=0$, i.e., $b_{2}^{2}= 4b_{1}b_{10}$ and $z=\frac{-b_{2}}{2b_{10}}$ is a real root of multiplicity two.
			\begin{enumerate}
				\item If $z >0$, i.e., $b_{2}b_{10} <0$, then the characteristic equation \eqref{characteristic eq} has two real double  roots
				\begin{equation*}
					\tau^{\pm}= \pm \sqrt{z}.
				\end{equation*}
				Accordingly,   \eqref{eq:gkdv_second_part with variable y} allows the following general solution
				\begin{equation*}
					g(y)=(c_{1}+c_{2}y)e^{\tau^{+}y}+(c_{3}+c_{4}y)e^{\tau^{-}y}.
				\end{equation*}
				The solution will blow up at infinity or be trivial. In this case, no physically relevant non-trivial solution is allowed.        
				\item If $z=0$, i.e., $b_{2}=0$, then the characteristic equation \eqref{characteristic eq} has a quadruple real root
				\begin{equation*}
					\tau= 0.
				\end{equation*}
				Accordingly, \eqref{eq:gkdv_second_part with variable y} allows the following general solution
				\begin{equation*}
					g(y)=c_{1}+c_{2}y+c_{3}y^{2}+c_{4}y^{3}.
				\end{equation*}
				The solution will blow up at infinity or be trivial. In this case, no physically relevant non-trivial solution is allowed.
				\item If $z<0$, i.e., $b_{2}b_{10}>0$, then the characteristic equation \eqref{characteristic eq} has double imaginary roots
				\begin{equation*}
					\tau^{\pm}= \pm i\sqrt{-z}.
				\end{equation*}
				Accordingly,  \eqref{eq:gkdv_second_part with variable y} allows the following general solution
				\begin{equation*}
					g(y) = \left( c_1 + c_2 y \right) \cos (\sqrt{-z}y) + ( c_3 + c_4 y ) \sin (\sqrt{-z}y).
				\end{equation*}
				It is a physically relevant, non-trivial symmetric solution if and only if $c_{1}^{2}+c_{3}^{2} \neq 0,c_{2}=c_{4} = 0$,  which corresponds to the following periodic steady solution
				\begin{equation*}
					g(y)=\sqrt{c_{1}^{2}+c_{3}^{2}} \cos (\sqrt{-z} y-\theta_{3}),
				\end{equation*}    
				where $\theta_{3}\in [0,\pi]$ is determined by $\cos \theta_{3} = \frac{c_{1}}{\sqrt{c_{1}^{2}+c_{3}^{2}}}$.
				By the initial condition $v_{0}(x)=v(0,x)$, we have $c_{1}=v_{0}(0),c_{3}=v_{0}(\pi \sqrt{\frac{b_{10}}{2b_{2}}})$.
			\end{enumerate}
			
			\item [] \textbf{Case III}: $D<0$. This case corresponds to $b_{2}^2-4b_{10} b_{1} < 0$, and  $z^{\pm}$ are a pair of complex conjugate roots. 
			Then, the characteristic equation \eqref{characteristic eq} has four complex roots 
			\begin{equation*}
				\tau_{1}^{\pm}= \nu \pm  w i , \qquad \tau_{2}^{\pm}=\pm \nu +  w i,
			\end{equation*}
			where  $\nu=\sqrt{\frac{\sqrt{4b_{10} b_{1}} - b_{2}}{4 \vert b_{10}\vert}}, w=\sqrt{\frac{\sqrt{4b_{10} b_{1}} + b_{2}}{4 \vert b_{10} \vert}} $. Note that both $\nu,w\in \mathbb{R}_{+}$.
			Hence, the general solution to \eqref{eq:gkdv_second_part with variable y} has the form
			\begin{equation*}
				g(y)=e^{\nu y}[c_1 \cos (w y)+c_2 \sin ( w y)] + e^{-\nu y} [c_3 \cos (w y)+c_4 \sin (w y) ] .
			\end{equation*}
			The solution will either blow up at infinity or be trivial. Therefore, no physically relevant non-trivial solution is allowed.  
		\end{enumerate}
		We finally turn to the  degenerate case ($b_{10}= 0, b_{2} \neq 0$). In this case,  we have $ z = -\frac{b_{1}}{b_{2}}$. Then,
		the general solution to \eqref{eq:gkdv_second_part with variable y} has the form
		\begin{equation*}
			g(y)=\left\{ \begin{array}{ccc}
				c_{1} e^{\sqrt{\vert z \vert} y}+c_{2} e^{-\sqrt{\vert z \vert} y},& \mbox{if} & \frac{b_{1}}{b_{2}}<0, \\
				c_{1} \cos (\vert z \vert y) +c_{2} \sin (\vert z \vert y), &  \mbox{if} & \frac{b_{1}}{b_{2}}>0, \\
				c_{1}+c_{2}y, & \mbox{if} & b_{1}=0.
			\end{array} \right.
		\end{equation*}
		The above solution is bounded only when $b_{1}b_{2}>0$ and $b_{10}=0$.  Therefore, a symmetric, physically relevant solution is allowed only when $c_{1}=v_{0}(0),c_{2}=v_{0}(\frac{\pi b_{2}}{2b_{1}})$.
		
		In sum, if the perturbed R-KdV-RLW equation \eqref{eq:gkdv} has a nontrivial bounded, symmetric traveling wave solution, then the solution must be of a linear combination of trigonometric functions. The necessary conditions i) - v) on the coefficients of \eqref{eq:gkdv} in the statement of this theorem  for the existence of nontrivial bounded symmetric traveling solutions have already been incorporated in the proof details above.
		
	\end{proof}
	\subsection{A admissible set of coefficients and initial data to guarantee nontrivial symmetric traveling solutions}
	The proof  of Theorem \ref{thm: no symmetric solitary solution} classifies all possible symmetric steady solutions and  gives a set of necessary conditions for such solutions. In order to extract sufficient conditions from the set of necessary conditions, we need to verify that the nontrivial symmetric steady solutions obtained from \eqref{eq:gkdv_second_part} do solve the nonlinear differential equation \eqref{eq:gkdv_symmetry equation_2}. However, this procedure involves complicated, tedious calculation in general, and a sufficient condition may be a resolution of a group of complicated algebraic equations on the coefficients of \eqref{eq:gkdv} and the  initial data. Therefore, we will not analyze all cases for the necessary conditions i) - v) in Theorem \ref{thm: no symmetric solitary solution} to seek all equivalent conditions for the existence of symmetric traveling solutions. Instead, we consider the particular case \eqref{eq:symmetric solution 1} as a example, and derive a set of simple constraints on the  coefficients of \eqref{eq:gkdv} that are convenient to verify. 
	
	It suffices to verify that the general solution \eqref{eq:symmetric solution 1}  satisfies \eqref{eq:gkdv_symmetry equation_2} with the variable substitution $y=x-\tilde{c}t$.
	
	Inserting  \eqref{eq:symmetric solution 1} into  \eqref{eq:gkdv_symmetry equation_2}  and denoting by  $\omega=\sqrt{-z^{-}}$, we obtain 
	\begin{equation}\label{eq:gkdv_symmetric equation 3}
		\begin{split}
			&\omega\left[A_{1}\sin\left(\omega y\right)+A_{2}\cos\left(\omega y\right)\right]+\frac{1}{2}\omega^{3} \left[A_{3}\sin\left(2\omega y\right)+A_{4} \cos\left(2\omega y\right)\right] \\
			&+\frac{1}{4}\omega^{3} \left[A_{5} \sin\left(3 \omega y\right)+A_{6} \cos\left(3 \omega y\right)\right]+B_{m,n}=0,
		\end{split}
	\end{equation}
	
	where $c_{3}=v_{0}(0)$,  $c_{4}=v_{0}\left( \frac{\pi }{2} \beta_{2} \right)$, $\beta_{2}$ is given by \eqref{def:alpha and beta} and 
	\begin{align*}
		A_{1}&=c_{3}\left[-a_{1}+a_{2}\omega^{2}+b_{11}\omega^{4}+\dot{\lambda}(-a_{3}\omega^{2}+a_{4}\omega^{4}+1)+\frac{1}{4} \omega^{2}(3b_{7}-b_{6})(c_{3}^{2}+c_{4}^{2})\right] ,\\
		A_{2}&=c_{4}\left[a_{1}-a_{2}\omega^{2}-b_{11}\omega^{4}-\dot{\lambda}(-a_{3}\omega^{2}+a_{4}\omega^{4}+1)-\frac{1}{4} \omega^{2} (3b_{7}-b_{6})(c_{3}^{2}+c_{4}^{2})\right], \\
		A_{3}&=(c_{4}^{2}-c_{3}^{2})[b_{3}+b_{5}-(b_{8}+b_{9}+b_{12})\omega^{2}],\\
		A_{4}&=2c_{3}c_{4}[b_{3}+b_{5}-(b_{8}+b_{9}+b_{12})\omega^{2}] ,\\
		A_{5}&=c_{3}(b_{6}+b_{7})(3c_{4}^{2}-c_{3}^{2}),\\
		A_{6}&=c_{4}(b_{6}+b_{7})(3c_{3}^{2}-c_{4}^{2}),\\
		B_{m,n}&=a_{5}\omega(n-1)\left[c_3 \cos(\omega  y) + c_4 \sin(\omega y)\right]^{n-1}\left[-c_3  \sin(\omega  y) + c_4 \cos(\omega y)\right]\\
		&-b_{4}\omega\left[c_3 \cos(\omega  y) + c_4 \sin(\omega y)\right]^{m}\left[-c_3 \sin(\omega  y) + c_4 \cos(\omega y)\right].
	\end{align*}
	Note that the above equation involves trigonometric functions of different frequencies. The idea is to use the orthogonality of the trigonometric functions. In particular, one needs to collect terms with the same frequency in  \eqref{eq:gkdv_symmetric equation 3} and then make the coefficients of each single-frequency term vanish.  Note that the term $B_{m,n}$ comes from the highly nonlinear parts of \eqref{eq:gkdv}. It contributes to low frequencies for small $m$ and $n$, but will be the only high frequency part when $m,n$ are large. So, we deal with $B_{m,n}$ first. When  $m\geq3$ or $n \geq4 $, the highest frequency term in \eqref{eq:gkdv_symmetric equation 3} only appear in $B_{m,n}$. This term is denoted by $B_{m,n}^{high}$ and given by
	\begin{align*}
		B_{m,n}^{high}&=a_{5}\frac{n-1}{2^{n-1}}\omega H(n-m-1)\left[ -c_{3}^{n} \sin\left(n\omega y\right)+c_{3}^{n-1}c_{4}  \cos\left(n\omega y\right)-c_{4}^{n-1}c_{3} \delta_{1} - c_{4}^{n}\delta_{2} \right]\\
		&-\frac{b_{4}}{2^{m}}\omega H(m-n+1) \left[ -c_{3}^{m+1} \sin\left((m+1)\omega y\right)+c_{3}^{m}c_{4}  \cos\left((m+1)\omega y\right) \right. \\
		&\left.  \hspace{9.7em}-c_{4}^{m}c_{3} \delta_{3} - c_{4}^{m+1}\delta_{4} \right],
	\end{align*}
	where $  H(x)=\left\{ \begin{array}{cc}
		1  & x \geq 0 \\
		0  & x <0
	\end{array}   \right.$ and 
	\begin{equation*}
		\delta_{1}=\left\{ \begin{array}{cc}
			(-1)^{\frac{n}{2}} \cos\left(n\omega  y\right)  & n \mbox{ is even} \\
			(-1)^{\frac{n-1}{2}} \sin\left(n\omega  y\right)  & n \mbox{ is odd} 
		\end{array}   \right. ,\quad
		\delta_{2}=\left\{ \begin{array}{cc}
			\sin\left(n\omega  y\right)  & n \mbox{ is even} \\
			\cos\left(n\omega  y\right)  & n \mbox{ is odd}
		\end{array}   \right. ,
	\end{equation*}
	\begin{equation*}
		\delta_{3}=\left\{ \begin{array}{cc}
			(-1)^{\frac{m+1}{2}} \cos\left((m+1)\omega  y\right)  & m \mbox{ is odd} \\
			(-1)^{\frac{m}{2}} \sin\left((m+1)\omega  y\right)  & m \mbox{ is even} 
		\end{array}   \right. \!,\,  \delta_{4}=\left\{ \begin{array}{cc}
			\sin\left((m+1)\omega  y\right)  & m \mbox{ is odd} \\
			\cos\left((m+1)\omega  y\right)  & m \mbox{ is even}\!
		\end{array}   \right..
	\end{equation*}
	It is worth to mention that $B_{m,n}^{high}$ is enough to determine the coefficients $a_{5}$ and  $b_{4}$. Although $B_{m,n}$ do contribute lower frequency terms which may be incorporated into the lower frequency part of \eqref{eq:gkdv_symmetric equation 3}, the orthogonality of trigonometric functions indicates that $a_{5}$ and  $b_{4}$ must vanish in this case so that $B_{m,n}$ contributes actually nothing to the lower frequency parts.  
	It remains to deal with the case when $0<m<3$ and $1<n<4$. The expressions of $B_{m,n}$, $m\in \{1,2\}$, $n\in \{2,3\}$ are given as follows:
	\begin{align*}
		B_{1,2}&=\omega (a_{5}-b_{4})\left[c_{3}c_{4} \cos \left( 2 \omega y \right) + \frac{1}{2}(c_{4}^{2}-c_{3}^{2}) \sin\left( 2 \omega y \right) \right],\\
		B_{2,2}&=-\frac{1}{4} b_{4}\omega  \left[ c_{3}(3c_{4}^{2}-c_{3}^{2}) \sin \left( 3 \omega  y \right) +c_{4}(3c_{3}^{2}-c_{4}^{2}) \cos \left( 3 \omega  y \right)\right]\\
		&+a_{5}\omega \left[  c_{3}c_{4} \cos\left(2\omega y\right)+\frac{1}{2} (c_{4}^{2}-c_{3}^{2}) \sin\left( 2\omega y \right)  \right]\\
		& -\frac{1}{4} b_{4}\omega \left[-c_{3} (c_{3}^{2}+c_{4}^{2})\sin \left( \omega  y \right) + c_{4}(c_{3}^{2}+c_{4}^{2}) \cos \left( \omega  y \right)\right],\\
		B_{1,3}&=\frac{1}{4} 2a_{5}\omega  \left[ c_{3}(3c_{4}^{2}-c_{3}^{2}) \sin \left( 3 \omega  y \right) +c_{4}(3c_{3}^{2}-c_{4}^{2}) \cos \left( 3 \omega  y \right)\right]\\
		&-b_{4}\omega \left[  c_{3}c_{4} \cos\left(2\omega y\right)+\frac{1}{2} (c_{4}^{2}-c_{3}^{2}) \sin\left( 2\omega y \right)  \right]\\
		& +\frac{1}{4} 2a_{5}\omega \left[-c_{3} (c_{3}^{2}+c_{4}^{2})\sin \left( \omega  y \right) + c_{4}(c_{3}^{2}+c_{4}^{2}) \cos \left( \omega  y \right)\right],\\
		B_{2,3}&=\frac{1}{4}\omega  \left[ 2a_{5}-b_{4} \right] \left[ c_{3}(3c_{4}^{2}-c_{3}^{2}) \sin \left( 3 \omega  y \right) +c_{4}(3c_{3}^{2}-c_{4}^{2}) \cos \left( 3 \omega  y \right)\right]\\
		&+\frac{1}{4}\omega  \left[ 2a_{5}-b_{4} \right] \left[-c_{3} (c_{3}^{2}+c_{4}^{2})\sin \left( \omega  y \right) + c_{4}(c_{3}^{2}+c_{4}^{2}) \cos \left( \omega  y \right)\right],
	\end{align*}
	By the orthogonality of the trigonometric functions, it is clear that \eqref{eq:symmetric solution 1} is a solution to \eqref{eq:gkdv_symmetry equation_2} (Hence it is a nontrivial symmetric traveling solution to the perturbed R-KdV-RLW equation \eqref{eq:gkdv}) if and only if one of the following happens:
	\begin{enumerate}
		\item If $m\geq3,n \geq4 $ and $m \neq n-1$, then
		\begin{equation*}
			a_{5}= 0,\quad b_{4}= 0,\quad A_{i} = 0 ,\; i=1,\ldots,6.
		\end{equation*}    
		\item  If $n \geq 4$ and $m=n-1$, then
		\begin{equation*}
			a_{5}(n-1)=b_{4},\quad A_{i}=0,\;i=1,\ldots,6,
		\end{equation*}
		\item If $m=1,\;n=2$, then
		\begin{align*}
			A_{1}&=0,\quad A_{2}=0,\quad (a_{5}-b_{4})(c_{4}^{2}-c_{3}^{2})+\omega^{2}A_{3}=0, \\
			A_{5}&=0,\quad A_{6}=0,\qquad \;\; 2(a_{5}-b_{4})c_{3}c_{4}+\omega^{2}A_{4}=0.
		\end{align*}
		\item If $m=2,n=2$, then
		\begin{align*}
			4A_{1}+b_{4}c_{3}(c_{3}^{2}+c_{4}^{2})&=0, \qquad 4A_{2}-b_{4}c_{4}(c_{3}^{2}+c_{4}^{2})=0,\qquad \; \omega^{2}A_{3}+a_{5}(c_{4}^{2}-c_{3}^{2})=0,\\
			\omega^{2}A_{4}+2a_{5}c_{3}c_{4}&=0,\quad \omega^{2}A_{5}-b_{4}c_{3}(3c_{4}^{2}-c_{3}^{2})=0,\quad \omega^{2}A_{6}- b_{4}c_{4}(3c_{3}^{2}-c_{4}^{2})=0.
		\end{align*}
		\item If $m=1,n=3$, then
		\begin{align*}
			2A_{1}-a_{5}c_{3}(c_{3}^{2}+c_{4}^{2})&=0,\quad\;\;\;  2A_{2}+a_{5}c_{4}(c_{3}^{2}+c_{4}^{2})=0,\qquad\; \omega^{2}A_{3}-b_{4}(c_{4}^{2}-c_{3}^{2})=0,\\
			\omega^{2}A_{4}-2b_{4}c_{3}c_{4}&=0,\; \omega^{2}A_{5}+2a_{5}c_{3}(3c_{4}^{2}-c_{3}^{2})=0,\; \omega^{2}A_{6} +2a_{5}c_{4}(3c_{3}^{2}-c_{4}^{2})=0.
		\end{align*}
		\item If $m=2,n=3$, then
		\begin{align*}
			4A_{1}-c_{3}(2a_{5}-b_{4})(c_{3}^{2}+c_{4}^{2})&=0,\qquad 4A_{2}+c_{4}(2a_{5}-b_{4})(c_{3}^{2}+c_{4}^{2})=0,\quad A_{3}=0,\\
			\omega^{2}A_{5}+c_{3}(2a_{5}-b_{4})(3c_{4}^{2}-c_{3}^{2})&=0,\quad \omega^{2}A_{6}+c_{4}(2a_{5}-b_{4})(3c_{3}^{2}-c_{4}^{2})=0,\quad A_{4}=0.
		\end{align*}
	\end{enumerate}
	\begin{remark}
		Note that the general solution \eqref{eq:gkdv_non-similar frequency solution} involves different frequencies, which will make the above procedure very complicated. Therefore, we do not proceed here to give sufficient conditions for \eqref{eq:gkdv_non-similar frequency solution} to be a symmetric traveling solution of \eqref{eq:gkdv}. Nevertheless, even when different frequencies exist simultaneously, the wave speed  may still be calculated directly for large $m$ and $n$. In fact,  we have $A_{1}=A_{2} =0$ 
		when $c_{3}^{2}+c_{4}^{2} \neq 0$, $a_{4}\omega^{4}-a_{3}\omega^{2}+1 \neq 0$ and $m\geq3$, $n \geq4 $. In this case, the constant propagation speed for the traveling solution $v(t,x)=g(x-\tilde{c}t)$ is given by 
		\begin{equation*}
			\tilde{c}=\frac{\frac{1}{4} \omega^{2}(b_{6}-3b_{7})(c_{3}^{2}+c_{4}^{2})+a_{1}-a_{2}\omega^{2}-b_{11}\omega^{4}}{a_{4}\omega^{4}-a_{3}\omega^{2}+1}.
		\end{equation*}
	\end{remark}
	
%
%
%
%
	

\end{document}